\def\makeautorefname#1#2{\expandafter\def\csname#1autorefname\endcsname{#2}}
\def\equationautorefname~#1\null{(#1)\null}
\newtheorem{thm}{Theorem}[section]
\newtheorem{cor}{Corollary}[section]
\newtheorem{prop}{Proposition}[section]
\newtheorem{lem}{Lemma}[section]
\newtheorem{conj}{Conjecture}[section]
\theoremstyle{definition}
\newtheorem{defn}{Definition}[section]
\newtheorem{quest}{Question}[section]
\newtheorem{rem}{Remark}[section]
\let\c@obs=\c@thm
\let\c@cor=\c@thm
\let\c@prop=\c@thm
\let\c@lem=\c@thm
\let\c@prob=\c@thm
\let\c@con=\c@thm
\let\c@conj=\c@thm
\let\c@defn=\c@thm
\let\c@notn=\c@thm
\let\c@notns=\c@thm
\let\c@exmp=\c@thm
\let\c@ax=\c@thm
\let\c@pro=\c@thm
\let\c@ass=\c@thm
\let\c@warn=\c@thm
\let\c@rem=\c@thm
\let\c@sch=\c@thm
\let\c@equation\c@thm
\numberwithin{equation}{section}
\title{Rigidity of Totally Geodesic Hypersurfaces in Negative Curvature}
\author{Ben Lowe}
\date{}
\begin{document}

\begin{abstract}

Let $M$ be a closed hyperbolic manifold containing a  totally geodesic hypersurface  $S$, and let $N$ be a closed Riemannian manifold homotopy equivalent to $M$ with sectional curvature bounded above by $-1$.   Then  it follows from the work of Besson-Courtois-Gallot  that $\pi_1(S)$ can be represented by a hypersurface $S'$ in $N$ with volume less than or equal to that of $S$.  We study the equality case: if $\pi_1(S)$ cannot be represented  by a hypersurface $S'$ in $N$ with volume strictly smaller than that of $S$, then  must $N$ be isometric to $M$?  We show that many such $S$ are rigid in the sense that the answer to this question is  positive.  On the other hand, we construct examples of $S$ for which the answer is negative.  

\end{abstract}

\maketitle

\section{Introduction}

There is a sharp upper bound for the area of a minimal surface in a Riemannian 3-manifold with a negative upper bound on its sectional curvature.   Namely, if $(M,g)$ is a Riemannian 3-manifold containing a minimal surface $\Sigma$, and if the sectional curvature $K_g$ satisfies $K_g \leq -1$, then 
\begin{equation} \label{negcurv} 
\text{Area}_g(\Sigma) \leq 2\pi |\chi (\Sigma)|.  
\end{equation} 
It is simple to check this inequality using the Gauss equation and the Gauss Bonnet formula, but the question of what happens in the case of equality is more interesting.  Equality in (\ref{negcurv}) implies that $\Sigma$ is totally geodesic and hyperbolic (constant curvature $-1$) in its induced metric.   The straightforward way for this to happen is if $g$ itself is hyperbolic, but is this the only way?

 V. Lima showed that the answer to this question is negative by constructing non-hyperbolic examples of  metrics on $\Sigma \times \mathbb{R}$ with sectional curvature at most $-1$ for which equality in (\ref{negcurv}) is attained \cite{l19}.  In section \ref{counterexample} we construct closed non-hyperbolic Riemannian 3-manifolds with sectional curvature at most $-1$ that contain totally geodesic hyperbolic  surfaces.  These give examples of metrics for which rigidity in the case of equality in the area inequality (\ref{negcurv}) fails, even in the case when the ambient manifold is closed.    The main goal of this paper is to give conditions under which equality in (\ref{negcurv}) forces the ambient metric to be hyperbolic.  

In fact, our arguments will work in any dimension.   For a closed hyperbolic $n$-manifold $\Sigma$ and a closed $n+1$-manifold $N$ with sectional curvature less than or equal to $-1$ the following holds.  Suppose we have a map  $F: \Sigma \rightarrow N$ so that the induced map $\pi_1(\Sigma) \rightarrow \pi_1(N)$ is injective.  Assume also that there is a closed hyperbolic manifold homotopy equivalent to $N$ via a homotopy equivalence that sends $F(\Sigma)$ to the homotopy class of a totally geodesic hypersurface. Then as we explain in Section \ref{bcgexplanation},  the next theorem is a consequence of the work of Besson-Courtois-Gallot.

\begin{thm}  \label{bcg} 
There exists a smooth map $\overline{F}$ homotopic to $F$ so that the $n$-volume of the image $\overline{F}(\Sigma)$ of $\Sigma$ in $N$ is smaller than the volume of $\Sigma$ in its hyperbolic metric.  The map $\overline{F}$ has the property that in the case that the volume of $\overline{F}(\Sigma)$ is equal to the volume of $\Sigma$ in its hyperbolic metric, then:
\begin{enumerate} 

 \item The image $\overline{F}(\Sigma)$ of $\Sigma$ under $\overline{F}$ is a totally geodesic hypersurface in $N$ with induced metric the hyperbolic metric on $\Sigma$.  
 
 \item The volume of any immersed hypersurface in $N$ homotopic to and distinct from $\overline{F}(\Sigma)$ up to reparametrization is greater than that of $\overline{F}(\Sigma)$.  
 
 \end{enumerate} 
\end{thm}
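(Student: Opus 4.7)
The plan is to construct $\overline{F}$ as the Besson-Courtois-Gallot natural map of the homotopy class of $F$, and to derive the volume inequality and its equality case from the BCG Jacobian estimate together with convexity in CAT($-1$) spaces. Using the hypothesized homotopy equivalence $\phi\colon M \to N$, I replace $F$ by the homotopic map $\phi \circ \iota$, where $\iota\colon \Sigma \hookrightarrow M$ is the totally geodesic embedding. Lifting to universal covers gives $\tilde\phi\colon \mathbb{H}^{n+1} \to \tilde{N}$ and identifies $\tilde\Sigma$ with a totally geodesic copy of $\mathbb{H}^n$ inside $\mathbb{H}^{n+1}$. Since $\tilde\phi$ is a $\pi_1$-equivariant quasi-isometry of Gromov hyperbolic spaces, it extends to a continuous equivariant boundary map $\partial\tilde\phi\colon S^n \to \partial\tilde{N}$; restricting to $\partial\tilde\Sigma = S^{n-1}$ yields $\Phi\colon S^{n-1} \to \partial\tilde{N}$. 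For $x \in \tilde\Sigma$, let $\mu_x$ be the Patterson-Sullivan visual measure on $\partial\tilde\Sigma$ from $x$ and set $\nu_x = \Phi_*\mu_x$. Because $\tilde{N}$ is CAT($-1$), the functional $y \mapsto \int_{\partial\tilde{N}} B_\xi(y)\, d\nu_x(\xi)$ is strictly convex with a unique minimizer, and I take $\overline{F}(x)$ to be this minimizer. Equivariance produces a smooth map $\overline{F}\colon \Sigma \to N$ homotopic to $F$.

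The crux is the Jacobian estimate. Differentiating the critical-point identity $\int_{\partial\tilde{N}} dB_\xi|_{\overline{F}(x)}\, d\nu_x(\xi) = 0$ and using $d\mu_x = e^{-(n-1)B^{\mathbb{H}^n}_\xi(x)}\, d\mu_0$ expresses $d\overline{F}(x)$ in terms of the symmetric forms $H_x(u,v) = \int dB_\xi(u)\, dB_\xi(v)\, d\nu_x$ and $K_x(u,v) = \int \mathrm{Hess}\, B_\xi(u,v)\, d\nu_x$ on $T_{\overline{F}(x)}\tilde{N}$. The curvature bound $K_N \leq -1$ gives the Hessian comparison $\mathrm{Hess}\, B_\xi \geq g - dB_\xi \otimes dB_\xi$, hence $K_x \geq g - H_x$. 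Applying the BCG algebraic Jacobian inequality to the pair $(H_x, K_x)$, restricted to the $n$-dimensional image subspace $d\overline{F}(T_x\tilde\Sigma) \subset T_{\overline{F}(x)}\tilde{N}$, then gives $|\mathrm{Jac}_n \overline{F}(x)| \leq 1$ pointwise. Integrating yields $\mathrm{Vol}(\overline{F}(\Sigma)) \leq \mathrm{Vol}(\Sigma)$.

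In the equality case, saturating the Jacobian bound almost everywhere forces the rigidity in BCG's inequality: one gets $\mathrm{Hess}\, B_\xi = g - dB_\xi \otimes dB_\xi$ on directions tangent to the image (so sectional curvatures of $N$ along such planes asymptotic to $\mathrm{supp}(\nu_x)$ equal $-1$), and the support of $\nu_x$ is forced to be a round $(n-1)$-sphere in $\partial\tilde{N}$. Together these pin $\overline{F}(\tilde\Sigma)$ down as a totally geodesic copy of $\mathbb{H}^n$ isometric to $\tilde\Sigma$, giving (1). For (2), $\overline{F}(\tilde\Sigma)$ is then a convex subset of the CAT($-1$) space $\tilde{N}$, so the nearest-point projection $\pi\colon \tilde{N} \to \overline{F}(\tilde\Sigma)$ is $1$-Lipschitz and strictly distance-decreasing in normal directions. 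Any immersion $F'$ homotopic to $\overline{F}$ lifts to $\tilde{F}'$, and $\pi \circ \tilde{F}'$ descends to a degree-one map $\Sigma \to \overline{F}(\Sigma)$; therefore $\mathrm{Vol}(\pi \circ F'(\Sigma)) \geq \mathrm{Vol}(\overline{F}(\Sigma))$, and the strict Lipschitz decrease whenever $F'(\Sigma) \not\subseteq \overline{F}(\Sigma)$ yields $\mathrm{Vol}(F'(\Sigma)) > \mathrm{Vol}(\overline{F}(\Sigma))$.

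The main obstacle is adapting the BCG Jacobian estimate from the equidimensional setting to this codimension-one setting: one must restrict the algebraic inequality to the $n$-dimensional image subspace rather than all of $T\tilde{N}$, and carefully track which directions in $\partial\tilde{N}$ the rigidity forces to be ``hyperbolic'' so as to recover the totally geodesic conclusion in (1). Verifying that the restricted inequality still carries enough rigidity, and that the normalization involving the entropy $n-1$ of $\tilde\Sigma = \mathbb{H}^n$ balances correctly against the ambient Hessian bound, is the most delicate algebraic point.
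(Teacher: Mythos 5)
Your overall strategy --- construct the Besson--Courtois--Gallot natural map via Patterson--Sullivan measures and barycenters, derive the pointwise Jacobian bound $\leq 1$, then feed the equality case into BCG rigidity --- is the same as the paper's, which simply applies \cite{bcg99} Theorems~1.10 and~1.2 as black boxes; reconstructing the natural map from scratch is fine but unnecessary detail. The genuine gap is in your deduction of item~(1). You assert that saturating the Jacobian bound forces $\mathrm{Hess}\,B_\xi = g - dB_\xi\otimes dB_\xi$ on image-tangent directions and forces $\mathrm{supp}(\nu_x)$ to be a ``round sphere,'' and that ``together these pin $\overline{F}(\tilde\Sigma)$ down as a totally geodesic copy of $\mathbb{H}^n$.'' But the BCG rigidity you invoke, restricted to the $n$-dimensional image subspace, only controls the \emph{intrinsic} geometry: it shows $d\overline{F}$ is everywhere a linear isometry onto its image, so the induced metric on $\overline{F}(\Sigma)$ is hyperbolic. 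It says nothing directly about the second fundamental form, and ``round sphere in $\partial\tilde N$'' is not a well-defined notion when $\tilde N$ has variable curvature. Passing from ``induced metric is hyperbolic'' to ``totally geodesic'' is exactly the step you leave unjustified, and it does not fall out of the Busemann Hessian identities you quote.

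The paper fills this gap with a short, elementary argument you should adopt: since $\overline{F}(\Sigma)$ minimizes $n$-volume in its homotopy class, it is a minimal hypersurface, so the principal curvatures $\lambda_1,\dots,\lambda_n$ sum to zero; if any $\lambda_i\neq 0$ then some pair satisfies $\lambda_i\lambda_j<0$, and the Gauss equation together with $K_N\leq -1$ would force the intrinsic sectional curvature through the corresponding principal directions to be strictly less than $-1$, contradicting hyperbolicity of the induced metric. Hence all $\lambda_i=0$ and $\overline{F}(\Sigma)$ is totally geodesic. Your argument for item~(2) via nearest-point projection onto the convex totally geodesic image together with a degree count is in the same spirit as the paper's citation of \cite{bcg08} and is sound once item~(1) is actually established.
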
 
\begin{rem} 
If $N$ is three dimensional, the previous theorem is essentially a consequence of the fact that $\overline{\Sigma}$ is homotopic to a least area minimal immersion \cite{sy79} and (\ref{negcurv}.)
\end{rem} 
 Theorem \ref{bcg}  leaves open the question of whether equality forces not just the image of some map homotopic to $F$,  but also the ambient metric $N$, to be hyperbolic. We will show that the answer to this question depends on how the image of $\Sigma$ sits inside $N$.

We say that a hypersurface $\Sigma$ in a Riemannian n-manifold $N$ is \textit{well-distributed} in $N$ if every point in the universal cover $\tilde{N}$ of $N$ is contained in an embedded solid hypercube all of whose hyperfaces lie in lifts of $\Sigma$ to $\tilde{N}$.    The following is our first main theorem. 

\begin{thm} \label{main} 
 Let $N$ be a Riemannian manifold with sectional curvature at most $-1$ that contains a totally geodesic hyperbolic hypersurface $\Sigma$.  Then if $\Sigma$ is well-distributed in $N$, $N$ must be hyperbolic.  	
	\end{thm}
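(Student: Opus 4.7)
The plan is to show that the universal cover $\tilde{N}$ of $N$ is isometric to $\mathbb{H}^n$, which forces $N$ to be hyperbolic. Write $\{\tilde{\Sigma}_\alpha\}$ for the lifts of $\Sigma$ to $\tilde{N}$; each is a totally geodesic, intrinsically hyperbolic $(n-1)$-submanifold, which I will call a \emph{wall}. The starting observation is that, by the Gauss equation combined with $K_{\tilde{N}} \leq -1$ and the fact that the walls are simultaneously totally geodesic and of constant intrinsic curvature $-1$, the ambient sectional curvature satisfies $K_{\tilde{N}}(\pi) = -1$ for every $2$-plane $\pi$ tangent to a wall.

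To upgrade this to all tangent $2$-planes, I would analyze the codimension-two transverse intersections of walls provided by the hypercubes. Let $p$ lie on $L = W \cap W'$, where $W,W'$ are two walls of a hypercube meeting at dihedral angle $\theta\in(0,\pi)$. Choose an orthonormal frame $X_1,\ldots,X_{n-2},Y,Z$ at $p$ with $X_j\in T_pL$, $Y\in T_pW\cap (T_pL)^\perp$, and $Z$ the unit normal to $W$, so that $T_pW' = \mathrm{span}(X_1,\ldots,X_{n-2},\,\cos\theta\,Y+\sin\theta\,Z)$. From $K(X_j,Y)=-1$ and $K(X_j,\cos\theta\,Y+\sin\theta\,Z)=-1$, both furnished by the previous paragraph, a direct expansion using the first Bianchi identity yields
\[
K(X_j,Z) \;=\; -1 \;-\; 2\cot\theta\cdot R(X_j,Y,Z,X_j).
\]
Combined with the bound $K(X_j,Z)\leq-1$, this forces a sign constraint on the off-diagonal component $R(X_j,Y,Z,X_j)$. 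Applying the same identity at another codimension-two intersection of $W$ with a further wall of the hypercube, at a different dihedral angle, and considering analogous relations along the other $T_pL$-directions, should give enough independent identities to conclude $R(X_j,Y,Z,X_j)=0$ and $K(X_j,Z)=-1$. Together with the first step this pins down the entire curvature operator at $p$ to that of $\mathbb{H}^n$.

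To propagate $K\equiv-1$ from this codimension-two stratum to all of $\tilde{N}$, I would use Riccati rigidity along normal geodesics to walls. Along a unit-speed geodesic $\gamma$ perpendicular to a wall $W$, the shape operator $A(t)$ of the equidistant hypersurface at distance $t$ satisfies $A'+A^2+R=0$ with $A(0)=0$. Because $K_{\tilde{N}}\leq-1$, Riccati comparison with $\mathbb{H}^n$ gives $A(t)\geq\tanh(t)I$. If $K=-1$ on the relevant $2$-planes at some point of $\gamma$ (as supplied by the previous paragraph), rigidity for equality in Riccati forces $A(t)=\tanh(t)I$, and hence $K\equiv-1$ in a full tubular neighborhood. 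The well-distributed hypothesis then ensures these neighborhoods cover $\tilde{N}$, so $K_{\tilde{N}}\equiv-1$, and $\tilde{N}\cong\mathbb{H}^n$.

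The hard part is the algebraic-combinatorial step in the second paragraph. The single identity $K(X_j,Z)=-1-2\cot\theta\,R(X_j,Y,Z,X_j)$ only determines $K(X_j,Z)$ modulo the mixed Riemann component $R(X_j,Y,Z,X_j)$; eliminating this component requires the hypercube to produce, near each point of the codimension-two stratum, a non-degenerate family of further wall intersections with distinct dihedral angles. Verifying that the well-distributed hypothesis genuinely delivers such a non-degenerate collection of angles, rather than a conspiratorial configuration under which a non-hyperbolic metric could still accommodate all the walls, is the most delicate geometric point of the plan.
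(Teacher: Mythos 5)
Your plan is genuinely different from the paper's, and the gap you flag at the end is in fact fatal as stated, not merely delicate. In a solid hypercube, each codimension-two face $L$ lies on exactly \emph{two} hyperfaces, so the well-distribution hypothesis hands you exactly one dihedral angle $\theta$ along $L$; it does not supply a second wall through $L$ at a different angle. The identity $K(X_j,Z)=-1-2\cot\theta\,R(X_j,Y,Z,X_j)$ is correct, but with only one $\theta$ per stratum point and the sign constraint from $K\le -1$ you cannot eliminate the term $R(X_j,Y,Z,X_j)$. The hypothesis literally gives one hypercube per point of $\tilde N$, not a family of walls with a non-degenerate set of normals through each codimension-two point, so the ``conspiratorial configuration'' you worry about is exactly what is permitted a priori.

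There is a second, independent gap in the propagation step. Along a geodesic $\gamma$ normal to a wall $W$, Riccati comparison from $A(0)=0$ and $R(\cdot,\gamma')\gamma'\le -I$ gives $A(t)\ge\tanh(t)\,I$, and equality at $t_0$ \emph{would} force $K(\cdot,\gamma')=-1$ on $[0,t_0]$. But knowing only $K(\cdot,\gamma'(t_0))=-1$ at the single point $t_0$ does not give $A(t_0)=\tanh(t_0)\,I$; the Riccati equation at $t_0$ reads $A'(t_0)+A(t_0)^2=I$, which pins down $A'(t_0)$ in terms of $A(t_0)$ but leaves $A(t_0)$ itself undetermined. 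So even granting the algebraic step, the rigidity you need does not follow. More basically, a generic point of $\tilde N$ sits in the interior of its hypercube and on no wall at all, so the curvature there is never touched by any local identity at wall intersections; you need a mechanism to propagate hyperbolicity off the walls, and the Riccati step as written does not furnish one.

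The paper avoids both problems by arguing globally. Given a solid hypercube $\Box\subset\tilde N$ containing a chosen point, it first shows (via a developing map on the simply connected boundary $\partial\Box$, using that the hyperfaces meet along constant dihedral angles) that $\partial\Box$ isometrically embeds in $\mathbb H^n$. It then glues the region $R_\Box$ bounded by $\Box$ into the unbounded complement of the image to form a manifold $H$. The key lemma is that any totally geodesic $2$-plane in $\mathbb H^n$ meeting $\Phi(\partial\Box)$ transversely, which meets $R_\Box$ in a geodesic polygon, actually extends to a totally geodesic hyperbolic plane across $R_\Box$; this is established by an inductive polygon comparison using the equality case of the Rauch corollary (\autoref{cofr}). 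An invariance-of-domain count in the Grassmann bundle then shows that all $2$-planes tangent to $R_\Box$ have curvature $-1$, so $H\cong\mathbb H^n$ and $R_\Box$ is hyperbolic. This only requires the single hypercube through each point that well-distribution actually provides, and it reaches every interior point of $\Box$ at once, which is precisely what your local approach cannot do.
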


  Let $M$ be a closed hyperbolic manifold that contains infinitely many totally geodesic hypersurfaces $\Sigma_k$  (see \cite{mr03}, \cite{m76} for examples.)  Then using Theorem \ref{main}, we can prove the following:   

\begin{thm} \label{main2} 
There exists $K\in \mathbb{N}$ so that the following holds. Fix  $\Sigma_k$ for  $k>K$.  Let $N$ be a Riemannian manifold homotopy equivalent to $M$ with sectional curvature at most $-1$.   Assume that there is a totally geodesic hyperbolic hypersurface in $N$ whose fundamental group injectively includes to a subgroup conjugate to $\pi_1(\Sigma_k)$ in $\pi_1(N)$, where we have used the homotopy equivalence to identify $\pi_1(M)$ and $\pi_1(N)$.  Then $N$ is isometric to  $M$ in its hyperbolic metric.  
	\end{thm}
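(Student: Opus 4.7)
The plan is to apply Theorem \ref{main} to the totally geodesic hypersurface $\Sigma_k' \subset N$ given by the hypotheses. If I can establish that $\Sigma_k'$ is well-distributed in $N$ for $k$ sufficiently large, then Theorem \ref{main} forces $N$ to be hyperbolic, and Mostow rigidity then gives that $N$ is isometric to $M$. The entire substance of the argument is thus to produce well-distribution for $\Sigma_k'$, which I would do in two steps: first establish well-distribution of the hypersurfaces $\Sigma_k$ in $M$ via unipotent dynamics, then transfer it to $\Sigma_k'$ in $N$ using the homotopy equivalence.

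For the first step, by the Mozes--Shah theorem (a consequence of Ratner's measure classification for homogeneous spaces), the hypersurfaces $\Sigma_k$ equidistribute in $M$ as $k\to\infty$, and, lifting to the Grassmannian bundle of tangent hyperplanes, one obtains equidistribution of $\Sigma_k$ together with their tangent directions. From this I would extract that for $k$ large, every point of $\tilde M = \mathbb H^n$ lies in an embedded solid hypercube whose $2n$ hyperfaces are contained in lifts of $\Sigma_k$: around any point one finds $n$ nearly mutually orthogonal tangent directions arbitrarily close to some lift, and the corresponding lifts bound a small right-angled hypercube.

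For the second step, the lifts of $\Sigma_k'$ in $\tilde N$ are in canonical bijection with the lifts of $\Sigma_k$ in $\tilde M$ via the conjugates of $\pi_1(\Sigma_k)$ in $\pi_1(N)=\pi_1(M)$. Since $\tilde N$ is CAT$(-1)$ and $\pi_1(N)$ is word hyperbolic, there is a $\pi_1$-equivariant homeomorphism $\partial \tilde N \cong \partial \tilde M = S^{n-1}$ under which the limit sets of the conjugate subgroups, i.e.\ the boundary spheres of the lifts of $\Sigma_k'$ and of $\Sigma_k$, correspond. This allows me to transfer the hypercube structure: two totally geodesic lifts in $\tilde N$ meet iff their boundary spheres meet in at least two ideal points, in which case the bi-infinite geodesic between such points lies in both lifts by convexity; and if $2n$ lifts in $\tilde M$ enclose a bounded hypercube, the corresponding $2n$ half-spaces in $\tilde N$ have bounded intersection because their boundary balls have empty common intersection at infinity. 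Each hypercube in $\tilde M$ thus produces a corresponding bounded region in $\tilde N$, yielding well-distribution of $\Sigma_k'$. Theorem \ref{main} then forces $N$ to be hyperbolic, and Mostow rigidity completes the proof. The principal technical obstacle is this last transfer step, in particular upgrading the combinatorial correspondence to produce \emph{embedded solid hypercubes} with the correct face structure in $\tilde N$, rather than merely bounded convex regions; a secondary obstacle is extracting genuine well-distribution, not merely denseness, from Mozes--Shah equidistribution.
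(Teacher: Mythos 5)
Your high-level strategy is the same as the paper's: equidistribute the $\Sigma_k$ via Mozes--Shah, transfer the resulting configuration of lifts from $\tilde M$ to $\tilde N$ through the boundary-at-infinity homeomorphism, invoke Theorem~\ref{main}, and finish with Mostow rigidity. However, there is a genuine gap at the transfer step, and it is not quite the one you flag. You worry about whether the transferred regions in $\tilde N$ are honest solid hypercubes with the right face structure; in fact this is handled by the discussion in Section~\ref{backgroundnegcurv}, where one checks that transverse intersections of lifts correspond under $\tilde F_\infty$ to transverse intersections, so that the region bounded by the corresponding lifts of $\Sigma_N$ is again a hypercube. The real obstruction is \emph{coverage}: knowing that the hypercubes bounded by lifts of $\Sigma_k$ cover every point of $\tilde M$ gives no control over whether the corresponding hypercubes in $\tilde N$ cover every point of $\tilde N$. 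The boundary homeomorphism $\partial_\infty \tilde M \to \partial_\infty \tilde N$ preserves only the combinatorics of which lifts meet and how they link at infinity; it gives no bound on where a given point of $\tilde N$ lands relative to the transferred boxes. Your claim that ``each hypercube in $\tilde M$ thus produces a corresponding bounded region in $\tilde N$, yielding well-distribution of $\Sigma_k'$'' asserts exactly what needs to be proved.

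The paper's resolution is to replace ``well-distribution'' in $M$ by a \emph{strong well-distribution} property phrased entirely in terms of data that \emph{is} visible at infinity, and then to derive coverage in $\tilde N$ by a connectedness argument rather than by transporting it directly. Concretely, for every complete geodesic $\gamma \subset \tilde M$ one produces a bi-infinite chain of hypercubes $\Box_i$ bounded by lifts of $\Sigma_k$ such that: (i) exactly two opposite hyperfaces of each $\Box_i$ meet $\gamma$; (ii) the remaining hyperplanes carrying the other hyperfaces of $\Box_i$ are disjoint from $\gamma$; (iii) consecutive boxes \emph{interlock}, meaning the hyperplane carrying $H_{i-1,\mathrm{top}}$ lies strictly between those carrying $H_{i,\mathrm{top}}$ and $H_{i,\mathrm{bottom}}$; and (iv) $\gamma$ is covered by the $\Box_i$. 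Properties (i)--(iii) are all detected by which boundary spheres separate which ideal endpoints, so they pass to the corresponding hypercubes $\Box_i'$ and the geodesic $\gamma_N$ in $\tilde N$ with the same ideal endpoints. Property (iv), by contrast, does \emph{not} transfer directly --- but it does not need to: in $\tilde N$ the interlocking condition makes the set of points of $\gamma_N$ lying in some $\Box_i'$ both open and closed in $\gamma_N$, hence all of $\gamma_N$. Since every point of $\tilde N$ lies on a complete geodesic, this gives well-distribution of $\Sigma_N$ and Theorem~\ref{main} applies. Your secondary concern --- upgrading Mozes--Shah equidistribution to the existence of such interlocking chains --- is also real and is handled in the paper by an explicit construction: one fixes a model right-angled hypercube $\Box_{\mathrm{model}}(\epsilon)$, uses $\epsilon$-density of tangent hyperplanes in the Grassmann bundle to find $\delta$-close copies with faces on lifts of $\Sigma_k$ positioned nearly orthogonally to $\gamma$, and strings these together along $\gamma$ so that the interlocking condition holds. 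Until you introduce some device like the interlocking chain (or an equivalent compactness/cocompactness argument) to force coverage in $\tilde N$, the proposal does not close.
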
 

\begin{rem} 
	
	Rather than assuming that $\Sigma$ (resp. $\Sigma_k$) was totally geodesic and hyperbolic in the previous two theorems, we could instead have assumed,  in view of Theorem \ref{bcg},  that $\Sigma$ (resp. $\Sigma_k$) was not homotopic in $N$  to an immersed hypersurface with volume smaller than the volume of $\Sigma$ (resp. $\Sigma_k$) in its hyperbolic metric.

\end{rem}

 Although it is unclear how sharp Theorem \ref{main2} is, certainly some assumption on the hypersurface in the model case  is needed by the constuction in Section \ref{counterexample}.   To the author's knowledge, this is the first example of a rigidity phenomenon for minimal hypersurfaces that is sensitive to how the minimal hypersurface is distributed in the ambient space.  We comment that Farrell-Jones proved that if $N$ is a closed manifold homotopy equivalent to a closed hyperbolic manifold $M$ of dimension above four, then $N$ and $M$ must  be homeomorphic \cite{fj89} but not necessarily diffeomorphic \cite{fj90}.  The sectional curvature of such an $N$ can moreover be taken to be pinched between $-1-\epsilon$ and $-1$ for any $\epsilon>0$.  By geometrization, all closed negatively curved 3-manifolds have hyperbolic metrics.

The results of this paper are evidence for the following conjecture.  

\begin{conj} 
	For every $D>0$ and $k\in \mathbb{N}$ , there exists $A=A(D,k)$ so that the following holds.  Let $N$ be a closed Riemannian manifold of dimension $k$, sectional curvature bounded above by $-1$, and diameter at most $D$.  Then if $N$ contains a  totally geodesic hyperbolic hypersurface with volume greater than $A$, $N$ must be isometric to a hyperbolic manifold.  
	\end{conj}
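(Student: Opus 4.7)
The strategy is to reduce to Theorem \ref{main} by showing that the hypotheses of the conjecture force $\Sigma$ to be well-distributed in $N$. I argue by contradiction: suppose the conjecture fails for some $D$ and $k$, yielding a sequence of closed Riemannian $k$-manifolds $N_i$ with sec $\le -1$, diameter $\le D$, containing totally geodesic hyperbolic hypersurfaces $\Sigma_i$ of volumes $V_i = \text{vol}(\Sigma_i) \to \infty$, none of them isometric to a hyperbolic manifold. Pass to based universal covers $(\tilde N_i, \tilde p_i)$ with $\tilde p_i$ on a fixed lift $\tilde\Sigma_i^0$; these are Hadamard manifolds with sec $\le -1$ carrying cocompact $\Gamma_i = \pi_1(N_i)$-actions with fundamental-domain diameter $\leq D$.

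The key geometric input is that the intrinsic diameter of a closed hyperbolic $(k-1)$-manifold of volume $V$ is $\gtrsim \log V$ by the volume-entropy inequality, whereas the extrinsic diameter of $\Sigma_i$ in $N_i$ is $\le D$. Hence $\Sigma_i$ must wrap through $N_i$ arbitrarily many times as $V_i\to\infty$, and long geodesic segments in $\tilde\Sigma_i^0$ return under suitable translations $\gamma\in\Gamma_i\setminus\pi_1(\Sigma_i)$ into $B(\tilde p_i, D)$, carrying $\tilde\Sigma_i^0$ to many distinct nearby lifts. A pigeonhole on the Grassmannian of hyperplanes in $T_{\tilde p_i}\tilde N_i$ then extracts, for $i$ large enough, $k$ such lifts whose tangent hyperplanes at their closest points to $\tilde p_i$ are pairwise $\varepsilon$-orthogonal for any prescribed $\varepsilon$. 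A CAT$(-1)$ comparison argument applied to the convex regions bounded by these hypersurfaces should yield an embedded topological hypercube containing $B(\tilde p_i, D)$ whose faces lie in these lifts. Translating by $\Gamma_i$ covers $\tilde N_i$ by such hypercubes, so $\Sigma_i$ is well-distributed; Theorem \ref{main} then forces $N_i$ to be hyperbolic, a contradiction.

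The main obstacle is to make the density and transversality steps rigorous without a lower curvature bound. Cheeger--Gromov precompactness fails, and $\text{vol}(N_i)$ is not a priori bounded in terms of $D$ (G\"unther's inequality gives only a \emph{lower} bound on ball volumes under sec $\le -1$), so a naive pigeonhole on a fundamental domain of $N_i$ is not available. I would instead do the counting directly on $\tilde\Sigma_i^0 \cong \mathbb{H}^{k-1}$, where exponential growth is rigidly controlled: the intrinsic--extrinsic diameter discrepancy produces many nontrivial cosets in $\Gamma_i/\pi_1(\Sigma_i)$ at bounded $\tilde N_i$-distance from $\tilde p_i$. The subtler task is ruling out the degenerate scenario in which all these nearby lifts concentrate near a single hyperplane direction at $\tilde p_i$; I expect this requires a careful perturbative analysis of totally geodesic hypersurfaces in Hadamard manifolds with sec $\le -1$, together with some structural input on the coset space $\Gamma_i/\pi_1(\Sigma_i)$ that goes beyond what is used in the proof of Theorem \ref{main2}.
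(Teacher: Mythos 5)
The statement you've been asked to prove is stated as a \emph{conjecture} in the paper, offered by the author only as something for which the paper's theorems are ``evidence''; the paper contains no proof of it, so there is no proof to compare your attempt against. Your sketch is being measured against an open problem, and you correctly present it as a sketch rather than a complete argument.

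Your high-level plan --- deduce well-distribution of $\Sigma$ from $\mathrm{vol}(\Sigma)\gg 1$ and $\mathrm{diam}(N)\le D$, then invoke Theorem~\ref{main} --- is the natural route, and the volume-entropy step (the intrinsic diameter of a closed hyperbolic $(k-1)$-manifold of volume $V$ is $\gtrsim \log V$, while its extrinsic diameter in $N$ is at most $D$, so many cosets of $\pi_1(\Sigma)$ in $\pi_1(N)$ must be represented at bounded distance from a basepoint) is correct. But the gap you flag at the very end --- ruling out the degenerate scenario in which all the nearby lifts of $\tilde\Sigma$ concentrate on a single hyperplane direction --- is not a technicality to be handled by ``a careful perturbative analysis''; it is the crux of the problem, and nothing in your sketch or in the paper's toolkit addresses it. In the proof of Theorem~\ref{main2}, well-distribution is obtained by running Mozes--Shah equidistribution on the \emph{model hyperbolic manifold} $M$ and then transporting it to $N$ through the boundary-at-infinity correspondence, which exists precisely because $N$ is homotopy equivalent to $M$ with $\Sigma$ matched to a totally geodesic hypersurface. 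The conjecture deletes the model entirely: $N$ is an arbitrary closed manifold with $\sec\le -1$. There is no homogeneous space in sight, no Ratner-type theorem in variable negative curvature, and no a priori obstruction to all lifts of $\Sigma$ meeting $B(p,D)$ being nearly tangent to one hyperplane or all passing near a common lower-dimensional totally geodesic axis. This is exactly the missing ingredient that keeps the statement a conjecture rather than a theorem.

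A secondary point: even granted $k$ pairwise nearly-orthogonal hyperplanes near $p$, an embedded solid hypercube needs $2k$ hyperfaces at controlled mutual distances with controlled intersection combinatorics; the CAT$(-1)$ step producing an embedded cube containing $B(p,D)$ is asserted rather than argued. This part is plausible and is closer in spirit to what is actually carried out in the proof of Theorem~\ref{main2} (where the ``model cube'' $\Box_{\mathrm{model}}(\epsilon)$ is built and $\delta$-closeness is controlled), but it still needs to be checked without the homogeneous structure of $\mathbb{H}^n$ behind it.
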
 

  One could formulate a similar conjecture for totally geodesic submanifolds $\Sigma$ of higher codimension.  In this case, one would  need to modify the statement by requiring that $\Sigma$ is not contained in some higher dimensional totally geodesic submanifold with small volume, and one would need to allow for the possibility that the universal cover of $N$ is some other rank one symmetric space.  It might also be interesting to drop the compactness assumption on $N$.  


\subsection{Related Work} 

The study of the kinds of rigidity questions considered in this paper goes back to Calabi, who showed the following.    For a Riemannian 2-sphere $S$ with sectional curvature between 0 and 1, every simple closed geodesic must have length at least $2\pi$ \cite{p46}.  When such a  geodesic has length exactly $2\pi$, then $S$ must be isometric to the unit sphere.  (See \cite{theorembycalabi}, where a proof of this result is given and attributed to Calabi.)

 For $\Sigma$ a minimal sphere in a Riemannian manifold $M$ of sectional curvature at most 1, inequality (\ref{negcurv}) is true with the direction of the inequality reversed.  In contrast to the negative curvature case discussed above and the examples of V. Lima, there \textit{is} a general rigidity statement in the positive curvature three dimensional setting, due to Mazet-Rosenberg \cite{mr14}.  Mazet recently proved a version in higher codimension \cite{m21}.    We also mention that Espinar-Rosenberg obtained rigidity statements  for surfaces of constant mean curvature $H\in (0,1)$ that have the largest possible area given that the ambient space has sectional curvature at most $-1$.  

 There are many examples of closed hyperbolic manifolds that contain closed  totally geodesic hypersurfaces (see \cite{mr03} for the three dimensional case, and  \cite{m76} for examples coming from so-called arithmetic hyperbolic manifolds of simplest type.)   An arithmetic hyperbolic manifold that contains one totally geodesic hypersurface must contain infinitely many.  In the other direction, Bader-Fisher-Miller-Stover \cite{bfms21} and Margulis-Mohamaddi (the latter in dimension 3) \cite{mm22} proved that a finite volume hyperbolic manifold with infinitely many totally geodesic hypersurfaces must be arithmetic.  There are examples of closed hyperbolic 3-manifolds that contain no totally geodesic surfaces \cite{mr03}.  It is an open problem whether there are closed hyperbolic manifolds in dimension above three that contain no totally geodesic hypersurfaces.

 Let $M$ be a closed hyperbolizable 3-manifold.   Calegari-Marques-Neves introduced an entropy functional on metrics $g$ defined by  asymptotic counts of minimal surfaces in $(M,g)$, and proved it to be uniquely minimized at the hyperbolic metric over all metrics with sectional curvature at most $-1$ \cite{cmn}.  The way we move information obtained via homogenous dynamics from constant curvature to variable curvature in the proof of Theorem \ref{main2} is inspired by their ideas.   

Results that assume a negative sectional curvature upper bound often have analogues that instead assume a negative Ricci or scalar curvature lower bound.  For example, in our joint work with Neves \cite{ln21} we prove the following analogue of inequality (\ref{negcurv}).  Let $(M,g_{hyp})$ be a closed hyperbolic 3-manifold containing a closed totally geodesic surface $\Sigma$.  Then for any metric $g$ on $M$ with scalar curvature at least $-6$, the area of any surface homotopic to $\Sigma$ must be at least 
\begin{equation} \label{scal} 
	Area_{g_{hyp}}(\Sigma)=-2\pi\chi(\Sigma).
\end{equation}   
For the latter theorem, rigidity in the case of equality holds, in that equality in (\ref{scal}) implies that $g$ must be hyperbolic.  In the setting of our paper, however, whether equality in (\ref{negcurv})  implies that $M$ must be hyperbolic depends on the homotopy class of $\Sigma$.  Thus, perhaps surprisingly, a  sectional curvature upper bound is in this context a weaker constraint than a scalar curvature lower bound.

\subsection{Outline}

This paper has three main parts.  The reader is encouraged to assume that $M$ and $N$  are three dimensional on a first reading.  

In the first part, we prove Theorem \ref{main}. Let $N$ be  a negatively curved manifold containing a totally geodesic hyperbolic hypersurface $\Sigma$ that satisfies the well-distribution condition.  Every point in the universal cover of $N$ is contained in some solid embedded cube $\Box$ whose hyperfaces lie in lifts  $\tilde{\Sigma}$ of $\Sigma$ to the universal cover.   The crucial point for our argument is that there is an isometric embedding $\Phi$ of the boundary of  $\Box$  in $\mathbb{H}^n$.   After constructing $\Phi$, we define a new metric on $\mathbb{H}^n$ by gluing the unbounded connected component of  $\mathbb{H}^n- \Phi(\partial \Box)$ to $\Box$.  Using the Rauch comparison theorem, we are able to argue that this metric must have been isometric to $\mathbb{H}^n$.  We thus conclude that the metric on the interior of $\Box$ has constant curvature $-1$.  Since  the point we chose was arbitrary, this shows that $N$ has constant curvature $-1$.  

In the second  part, we find examples of totally geodesic hypersurfaces $\Sigma$ in closed hyperbolic manifolds $M$ that satisfy a slightly stronger version of the well distribution condition.   Here we rely on work by Mozes-Shah that implies uniform distribution statements for totally geodesic hypersurfaces in closed hyperbolic manifolds.   The strong well distribution condition will imply that for any Riemannian manifold $N$ homotopy equivalent to $M$ with sectional curvature at most $-1$, any totally geodesic hypersurface in $N$ corresponding to $\Sigma$ will satisfy the well distribution condition.  This allows us to apply Theorem \ref{main} to prove Theorem \ref{main2}.  

In the third part, we construct examples of closed Riemannian manifolds $M$ with sectional curvature at most $-1$ that contain totally geodesic hyperbolic hypersurfaces without themselves being hyperbolic.  

\section{Acknowledgements} 
We are grateful to Laurent Mazet  for explaining to us the construction in Proposition \ref{prop1}.  We also thank Antoine Song and Matthew Stover.  The author was supported by NSF grant DMS-2202830.

\section{Background} \label{prelim} 

In this section we collect some facts that we will need in the paper. 

\subsection{} 

Informally speaking, the Rauch comparison theorem states that if one Riemannian manifold is more negatively curved than the other, then its Jacobi fields will grow faster than those of the other.  We will need the following corollary of that theorem (see \cite{ce70}[Cor. 1.35, Remark 1.37].)

\begin{cor} [Corollary of Rauch Comparison Theorem] \label{cofr} 
Let  $(M,g)$ be a Riemannian manifold of dimension $n$ and let  $m\in M $.  Assume the sectional curvature $K_g$ of $(M,g)$ satisfies $K_{g} \leq  -1 $.   Choose a point $m_0 \in \mathbb{H}^n$, and let $I: T M_m \rightarrow T \mathbb{H}^n_{m_0}$ be a linear isometry between the tangent spaces.  Let  $c: [0,1] \rightarrow M$ be a geodesic segment in $(M,g)$ so that $exp_m$ is a nonsingular embedding on  $s \cdot exp_m^{-1} (c(t))$ for  $0\leq s \leq 1$ and $0 \leq t \leq 1$.  Then we have 
\begin{equation} \label{lc} 
L[c] \geq L[exp_{m_0} \circ I \circ exp_m^{-1}(c)].  
\end{equation} 
\noindent Moreover, in the case of equality the image of the map 
\[
(s,t)\mapsto exp_{m}(s \cdot exp_m^{-1}(c(t))),  \hspace{2mm} 0\leq s \leq 1, \hspace{1mm} 0 \leq t \leq 1 
\]
\noindent is a solid totally geodesic triangle, and every 2-plane tangent to the image has constant sectional curvature $-1$.   
\end{cor}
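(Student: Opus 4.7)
The plan is to apply the classical Rauch comparison theorem to the two-parameter variation of radial geodesics issuing from $m$ toward the points of $c$. Define $V:[0,1]\times[0,1] \to M$ by $V(s,t) := \exp_m(s \cdot v(t))$, where $v(t) := \exp_m^{-1}(c(t))$. The embedding/nonsingularity hypothesis ensures that $V$ is well-defined and smooth, and that for each fixed $t$ the curve $s\mapsto V(s,t)$ is a geodesic from $m$ to $c(t)$ free of conjugate points up to $s=1$. The transverse field $J_t(s):=\partial_tV(s,t)$ is then a Jacobi field along this geodesic satisfying $J_t(0)=0$ and $J_t'(0)=v'(t)$. I would mirror this construction in $\mathbb{H}^n$ by setting $\tilde V(s,t):=\exp_{m_0}(s\cdot I(v(t)))$, producing transverse Jacobi fields $\tilde J_t$ with $\tilde J_t(0)=0$ and $\tilde J_t'(0)=I(v'(t))$. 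Since $I$ is a linear isometry, the initial data $\bigl(|J_t'(0)|,\,\langle J_t'(0),v(t)\rangle\bigr)$ and $\bigl(|\tilde J_t'(0)|,\,\langle \tilde J_t'(0),I(v(t))\rangle\bigr)$ match, which is exactly the hypothesis required to feed into Rauch.

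Under $K_g\leq -1 = K_{\mathbb{H}^n}$, Rauch's comparison theorem yields $|J_t(s)|\geq |\tilde J_t(s)|$ for every $s,t\in[0,1]$. Setting $s=1$ and noting that $J_t(1)=c'(t)$ while $\tilde J_t(1) = (\exp_{m_0}\circ I\circ \exp_m^{-1}\circ c)'(t)$, we get a pointwise integrand inequality; integrating in $t$ over $[0,1]$ produces (\ref{lc}).

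For the equality case, assume equality in (\ref{lc}). The integrand inequality is pointwise and both integrands are continuous in $t$, so equality of the integrals forces $|J_t(1)|=|\tilde J_t(1)|$ for every $t\in[0,1]$. The rigidity part of Rauch (Cheeger--Ebin Remark 1.37) then upgrades this to $|J_t(s)|=|\tilde J_t(s)|$ for all $s\in[0,1]$ and, along each radial geodesic $s\mapsto V(s,t)$, forces every 2-plane spanned by the radial velocity $\partial_sV$ and the transverse Jacobi field $J_t$ to have sectional curvature exactly $-1$. At smooth points these 2-planes are precisely the tangent planes to the image of $V$, so every 2-plane tangent to the image has curvature $-1$. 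Finally, the coincidence of Jacobi field norms throughout $V$, together with parallel transport of orthonormal frames along the radial geodesics, produces an isometry from the hyperbolic triangle built by $\tilde V$ onto the image of $V$; hence the image is a solid totally geodesic triangle isometric to a geodesic triangle in $\mathbb{H}^n$.

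The step I expect to require the most care is the invocation of the equality case: one must propagate the equality of Jacobi field norms from $s=1$ back to all intermediate $s$, and then conclude the curvature statement for the full tangent plane to the variation rather than for a single direction within it. This is standard but hinges on two ingredients that should be checked: that the radial geodesics remain within the regular (conjugate-point-free) locus of $\exp_m$ for $s\in[0,1]$, which is exactly what the nonsingular-embedding assumption grants, and the precise formulation of Cheeger--Ebin Remark 1.37 that packages Jacobi field rigidity together with sectional curvature rigidity along the comparison geodesic.
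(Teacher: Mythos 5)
The paper gives no proof of this corollary; it simply cites \cite{ce70}[Cor.\ 1.35, Rem.\ 1.37], and your reconstruction is essentially the Cheeger--Ebin argument: compare the transverse Jacobi fields $J_t(s)=\partial_t\exp_m(sv(t))$ against their hyperbolic counterparts, invoke Rauch to get $|J_t(s)|\ge|\tilde J_t(s)|$, set $s=1$, integrate in $t$, and then appeal to the rigidity part for equality. The inequality half is complete and correct, including the observation that the matched initial data consist of $|J_t'(0)|$ and the inner product $\langle J_t'(0),v(t)\rangle$, which is exactly what Rauch needs even when $v'(t)$ is not radial-orthogonal.

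The one place that is glossed over is the passage from ``intrinsic isometry'' to ``totally geodesic.'' As written, you produce a map $\tilde V(s,t)\mapsto V(s,t)$ that identifies Jacobi-field \emph{norms} (hence induced metrics), and then conclude ``hence the image is a solid totally geodesic triangle.'' An intrinsic isometry onto the image does not by itself control the second fundamental form of that image inside $(M,g)$. What the equality case of Rauch (as packaged in Cheeger--Ebin's remark) actually gives is stronger than equality of norms: the transverse Jacobi field must satisfy the constant-curvature Jacobi equation and its orthogonal part must be a $\sinh$ multiple of a \emph{parallel} vector field along each radial geodesic $\gamma_t$. From this one gets that $\frac{D}{\partial s}J_t(s)$ lies in the span of $\gamma_t'(s)$ and $J_t(s)$, i.e.\ stays tangent to the image, which (together with $\frac{D}{\partial s}\gamma_t'=0$ and the fact that the boundary geodesic gives the remaining component) forces the second fundamental form to vanish. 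So the conclusion is right and is indeed inside Remark 1.37, but you should state that you are using this \emph{vector-valued} rigidity of $J_t$ rather than just the norm equality, since the latter alone is not enough to conclude the totally geodesic claim.
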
 

\subsection{} \label{backgroundnegcurv} 
Let $M$ be a closed hyperbolic n-manifold.  Then $M$ is the quotient of $\mathbb{H}^n$ by a discrete group isomorphic to $\pi_1(M)$ acting properly discontinuously and by isometries.  If $N$ is closed negatively curved Riemannian manifold, then $M$ is homotopy equivalent to $N$ if and only if $\pi_1(M)$ is isomorphic to $\pi_1(N)$.  Assume this is the case, and fix a homotopy equivalence $F:M \rightarrow N$.  We can lift $F$ to a map $\tilde{F} : \tilde{M} \rightarrow \tilde{N}$ that commutes with the actions of $\pi_1(M)$ and $\pi_1(N)$ by deck transformations.  

 The boundary at infinity $\partial_{\infty}(X)$ of a simply connected negatively curved manifold $X$ is the set of geodesic rays in $X$ up to the equivalence relation of remaining at finite distance for all time when parametrized by arc-length.  For any $x\in X$, the exponential map defines a bijection between the unit sphere in the tangent space to $X$ and $\partial_{\infty}(X)$.  $\partial_{\infty}(X)$ is topologized so that this map is a homeomorphism for all $x$.    Every geodesic in $X$ is uniquely determined by its two endpoints in $\partial_{\infty}(X)$, and conversely any two distinct points in $\partial_{\infty}(X)$ determine a geodesic.  

 The fundamental groups $\pi_1(M)$ and $\pi_1(N)$ act by homeomorphisms on $\partial_{\infty}(\tilde{M})$ and $\partial_{\infty}(\tilde{N})$.   The map $\tilde{F}$  induces a homeomorphism $\tilde{F}_{\infty}: \partial_{\infty} \tilde{M} \rightarrow \partial_{\infty} \tilde{N}$ between the boundaries at infinity of $\tilde{M}$ and $\tilde{N}$, that commutes with the actions of $\pi_1(M)$ and $\pi_1(N)$.

 Suppose that $\Sigma$ and $\Sigma_N$ are closed totally geodesic hyperbolic hypersurfaces of $M$ and $N$ so that $F_* \pi_1(\Sigma)$ is conjugate to $\pi_1(\Sigma_N)$ in $\pi_1(N)$.  Let $\tilde{\Sigma}$ be a lift of $\Sigma$ to $\tilde{M}$, and suppose that the intersection
 \[
 G = \cap_{i=1}^{n-k}\gamma_i \tilde{\Sigma}
 \]
is transverse for some $\gamma_1,..,\gamma_{n-k} \in \pi_1(M)$.  Then $G$  is a $k$-dimensional totally geodesic subspace in $\tilde{M}\cong \mathbb{H}^n$.  The equatorial $n-2$-sphere  $\tilde{F}_{\infty}(\gamma_i \tilde{\Sigma})$ in $\partial_{\infty}(N)$ bounds a unique totally geodesic hyperplane that lifts $\Sigma_N$, which by an abuse of notation we call $\gamma_i \tilde{\Sigma}_n$. It will be important for us that
\[
 \cap_{i=1}^{n-k}\gamma_i \tilde{\Sigma}_N
\]
is a totally geodesic k-plane with boundary at infinity $\tilde{F}_{\infty}(\partial_{\infty} G)$.  This implies, for example, that if $\Box$ is an embedded hypercube in $\tilde{M}$ whose hyperfaces are totally geodesic and extend to totally geodesic hyperplanes containing lifts $\gamma \tilde{\Sigma}$ of $\Sigma$ to $\tilde{M}$, then the region $\Box_N$ bounded by  the corresponding lifts of $\Sigma_N$ in $\tilde{N}$   will also be a hypercube.  
\subsection{} \label{msintro} 

We state the result from homogeneous dynamics that we will need. Let $\Sigma_k$ be a sequence of distinct totally geodesic hypersurfaces in a closed hyperbolic n-manifold $M$.  Denote by  $\hat{\Sigma}_k$ their lifts to the Grassmann bundle $Gr_{n-1}(M)$ of unoriented tangent $n-1$ planes to $M$.  The $\hat{\Sigma}_k$ define probability measures $\mu_k$ on $Gr_{n-1}(M)$ by 
\[
\mu_k(f) := \frac{1}{Vol(\hat{\Sigma}_k)}\int_{\hat{\Sigma}_k} f d \hat{\Sigma}_k.  
\]
for $f$ a continuous function on $Gr_{n-1}(M)$, and where $d \hat{\Sigma}_k$ is the volume form for the hyperbolic metric on $\Sigma_k$.  The hyperbolic metric induces a metric on   $Gr_{n-1}(M)$ on which we denote the volume measure, normalized to have unit volume, by $\mu_{Leb}$.  It follows from Ratner's measure classification theorem that any weak-$*$ limit of the $\mu_k$ is equal to a convex combination of measures supported on totally geodesic hypersurfaces and $\mu_{Leb}$.   That we can rule out ergodic components that are supported on totally geodesic hypersurfaces is a consequence of work by Mozes-Shah \cite{ms95}[Theorem 1.1]: 
\begin{thm} \label{ms} 
	The $\mu_k$ weak-$*$ converge to $\mu_{Leb}$.  
	\end{thm}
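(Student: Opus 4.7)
The plan is to combine Ratner's measure classification theorem, the algebraic structure of $SO(n,1)$, and the Dani--Margulis linearization technique to rule out the ``totally geodesic'' ergodic components in any weak-$*$ limit. Since the space of probability measures on the compact space $Gr_{n-1}(M)$ is weak-$*$ sequentially compact, it suffices to prove that any weak-$*$ limit point of $\{\mu_k\}$ equals $\mu_{\mathrm{Leb}}$, so I may pass to a subsequence $\mu_k \to \mu$.

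First I would lift the picture to homogeneous space. Realize the frame bundle of $M$ as $\Gamma \backslash G$, where $G = SO(n,1)^{\circ}$ and $\Gamma = \pi_1(M) \subset G$ is a cocompact lattice, and identify $Gr_{n-1}(M)$ with the quotient of $\Gamma \backslash G$ by the compact subgroup fixing an unoriented tangent hyperplane. Under this identification, totally geodesic hypersurfaces in $M$ correspond to closed orbits in $\Gamma \backslash G$ of the subgroup $H = SO(n-1,1)^{\circ}$, and each $\mu_k$ lifts to an $H$-invariant Borel probability measure $\tilde{\mu}_k$ supported on a closed $H$-orbit $H \cdot x_k$.

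Next I would invoke Ratner's theorem together with the algebraic structure of $G$: every ergodic $H$-invariant probability measure on $\Gamma \backslash G$ is the normalized Haar measure on a closed orbit $L \cdot x$ of some closed connected subgroup $L$ with $H \subseteq L \subseteq G$ and $L \cap x \Gamma x^{-1}$ a lattice in $L$. Because $H$ is a maximal connected proper subgroup of $G$, the only options are $L = H$ (yielding measures supported on a single totally geodesic hypersurface) and $L = G$ (the Haar measure, which projects to $\mu_{\mathrm{Leb}}$). So, writing $\tilde{\mu}$ for any weak-$*$ limit of the $\tilde{\mu}_k$, the ergodic decomposition of $\tilde{\mu}$ splits into a part supported on a countable union of closed $H$-orbits and a multiple of Haar measure; the job is to kill the first part.

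The core, and the main obstacle, is exactly this last step: ruling out concentration of mass on closed $H$-orbits in the limit. I would follow the linearization method of Dani--Margulis (as axiomatized by Mozes--Shah). Suppose for contradiction that, after passing to a subsequence, a definite amount of mass of $\tilde{\mu}_k$ persists in every neighborhood of some closed $H$-orbit $H \cdot x_0$ corresponding to a fixed totally geodesic hypersurface $\Sigma$. Consider a finite-dimensional representation $V$ of $G$ together with a vector $v_H \in V$ whose stabilizer is (up to finite index) the normalizer of $H$, so that closed $H$-orbits in $\Gamma \backslash G$ correspond to $\Gamma$-orbits of points in $G \cdot v_H$. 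Because each $\Sigma_k$ is distinct from $\Sigma$ for large $k$, the corresponding vectors in $V$ are not fixed by the transverse directions, so a unipotent one-parameter subgroup $U \subset H$ moves them along polynomial trajectories that are $(C,\alpha)$-good. The Dani--Margulis nondivergence estimate then forces the $U$-averages of $\tilde{\mu}_k$ to avoid staying in any proper algebraic tube around $\Sigma$; combined with the cocompactness of $\Gamma$ in $G$, this pushes the limiting mass out to a strictly larger algebraic orbit, which by the classification above must be all of $\Gamma \backslash G$. This contradicts the assumed concentration on $H \cdot x_0$, so every ergodic component of $\tilde{\mu}$ is Haar. Hence $\mu = \mu_{\mathrm{Leb}}$, and weak-$*$ convergence of the full sequence follows.
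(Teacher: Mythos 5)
Your proposal sketches the proof that the paper itself does not reproduce but simply cites from Mozes--Shah \cite{ms95}, and the outline you give (Ratner's measure classification on $\Gamma\backslash SO(n,1)^\circ$, the maximality of $SO(n-1,1)^\circ$ among connected proper subgroups, and the Dani--Margulis linearization to rule out limit mass on a single closed $H$-orbit) faithfully reproduces the internal structure of the Mozes--Shah argument specialized to this situation. So this is the same approach as the paper, just expanded rather than cited as a black box.
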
 

In fact, we will only require a weaker statement.  We say that a surface $\Sigma$ is $\epsilon$\textit{-dense} if every tangent plane in $Gr_{n-1}(M)$ is at a distance of at most $\epsilon$ from some tangent plane to $\Sigma$, where the distance is measured in the natural metric on $Gr_{n-1}(M)$.  The corollary below follows from Theorem (\ref{ms}).  

\begin{cor} \label{coms} 
	For every $\epsilon>0$ there is $K$ so that $\Sigma_k$ is $\epsilon$-dense if $k>K$.  
	\end{cor}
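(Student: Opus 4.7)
My plan is to deduce Corollary \ref{coms} from Theorem \ref{ms} by a straightforward compactness-and-Portmanteau argument, the key non-formal input being that nonempty open sets in the Grassmann bundle have positive Lebesgue measure.

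I would argue by contradiction. Suppose there exists $\epsilon>0$ and an increasing sequence of indices along which $\Sigma_k$ fails to be $\epsilon$-dense. After passing to this subsequence we may pick, for each $k$, a point $p_k \in Gr_{n-1}(M)$ whose open $\epsilon$-ball $B(p_k,\epsilon)$ is disjoint from $\hat{\Sigma}_k$. Since the supports of $\mu_k$ are precisely $\hat{\Sigma}_k$, this gives $\mu_k(B(p_k,\epsilon))=0$ for every $k$.

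Next I would use compactness of $Gr_{n-1}(M)$ (the base $M$ is closed and the Grassmann fibers are compact) to pass to a further subsequence along which $p_k$ converges to some $p_\infty \in Gr_{n-1}(M)$. For $k$ large enough, $B(p_\infty,\epsilon/2) \subset B(p_k,\epsilon)$, so
\[ \mu_k(B(p_\infty,\epsilon/2)) = 0 \quad \text{for all sufficiently large } k. \]
The Portmanteau theorem applied to the open set $B(p_\infty,\epsilon/2)$ together with the weak-$*$ convergence $\mu_k \to \mu_{Leb}$ given by Theorem \ref{ms} then yields
\[ \liminf_{k\to\infty} \mu_k(B(p_\infty,\epsilon/2)) \;\geq\; \mu_{Leb}(B(p_\infty,\epsilon/2)). \]
The right-hand side is strictly positive, because $\mu_{Leb}$ is a normalization of the Riemannian volume on the smooth connected manifold $Gr_{n-1}(M)$ and therefore charges every nonempty open set. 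This contradicts the vanishing on the left, completing the argument.

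I do not foresee any substantive obstacle: all the dynamical content is already packaged in Theorem \ref{ms}, and the role of this short proof is merely to convert weak-$*$ convergence of the measures $\mu_k$ into the topological $\epsilon$-density statement actually needed in the subsequent geometric steps. The only point that requires even a moment of thought is the positivity of $\mu_{Leb}$ on open balls, which is immediate from its definition as the (normalized) Riemannian volume.
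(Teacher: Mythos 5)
Your argument is correct, and since the paper merely asserts that the corollary follows from Theorem \ref{ms} without writing out the details, your compactness-and-Portmanteau reduction is exactly the standard argument that is being left implicit. No gaps: the key facts you invoke (compactness of $Gr_{n-1}(M)$, $\operatorname{supp}\mu_k = \hat{\Sigma}_k$, Portmanteau's lower bound on open sets, and full support of $\mu_{Leb}$) are all valid in this setting.
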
 

\subsection{} \label{bcgexplanation} 

Finally, we explain how Theorem \ref{bcg} follows from the work of Besson-Courtois-Gallot.  That $\overline{F}$ is homotopic to a smooth map with Jacobian pointwise smaller than 1 is a direct consequence of  \cite{bcg99}[Theorem 1.10], taking the manifold $X$ in the statement of that theorem to be the universal cover $\tilde{N}$ of $N$, and the representation to  correspond to the action of of $\pi_1(\Sigma)\subset \pi_1(N)$ on $\tilde{N}$ by deck transformations.  That the action of $\pi_1(\Sigma)$ is convex cocompact follows from the fact that $N$ is homotopy equivalent to a closed hyperbolic manifold via a homotopy equivalence that sends $F(\Sigma)$ to a totally geodesic hypersurface.    For the equality case, if no immersion homotopic to $\overline{F}$ has smaller volume than the volume of $\Sigma$ in its hyperbolic metric,  then the volume of $\overline{F}(\Sigma)$ is equal to the volume of $\Sigma$ in the hyperbolic metric, and the Jacobian of $\overline{F}(\Sigma)$ must be everywhere equal to 1.  In that case \cite{bcg99}[Theorem 1.2] implies that the differential of $\overline{F}$ is at every point  an isometry onto its image.  The image $\overline{F}(\Sigma)$ is therefore isometric to the hyperbolic metric on $\Sigma$ in its induced metric.  

Since $\overline{F}(\Sigma)$ minimizes volume over all hypersurfaces homotopic to it, we know that $\overline{F}(\Sigma)$ is a minimal hypersurface.  Thus at every point the principal curvatures $\lambda_1,..,\lambda_n$ sum to zero.  If any were nonzero, then we could find $\lambda_i>0$ and $\lambda_j<0$.  The tangent plane to $\overline{F}(\Sigma)$ spanned by the corresponding principal directions would then have sectional curvature strictly less than one by the Gauss equation and the fact that the sectional curvature of $N$ is less than or equal to $-1$, which contradicts the fact that $\overline{F}(\Sigma)$ is hyperbolic in its induced metric.  If there were some other immersed hypersurface in the homotopy class of $\overline{F}(\Sigma)$  with the same volume, then one could argue as in  \cite{bcg08}[Proof of Theorem 1.2] to show that it is equal to $\overline{F}(\Sigma)$.  

\section{Proof of Theorem 1.4}

In this section we give the proof of Theorem \ref{main}.  Again we recommend that the reader assume $N$ is three dimensional on a first reading.

Assume that $\Sigma$ is a closed totally geodesic hypersurface in $N$ that satisfies the well-distribution condition.   This means that every point $p \in \tilde{N}$ is contained in an embedded  solid hypercube $\Box$ whose hyperfaces are contained in lifts of $\Sigma$ to $\tilde{N}$.  	We claim that there is an isometric embedding $\Phi$ of the boundary $\partial \Box$ of $\Box$ in $\mathbb{H}^n$.  

	\subsection{} 
	
	To illustrate some of the ideas, we first give a proof of a similar statement with the boundary of the cube $\partial \Box$ replaced by the boundary $\Delta$ of a solid embedded tetrahedron $\Delta \subset \tilde{N}$ whose faces are contained in lifts of $\Sigma$.  We assume also that $N$ is three dimensional.  We originally tried to work with tetrahedra rather than cubes, but it does not seem possible to follow the approach of  Section \ref{proofmainthm}  using tetrahedra.

		We claim that $\Delta$ isometrically embeds in $\mathbb{H}^3$.   Label the vertices of $\Delta$ as $A,B,C,D$.   Note that the faces of $\Delta$ meet at constant angles, since the faces are totally geodesic-- i.e., for any edge of $\Delta$, the angle that the two faces meeting at that edge make at any point on the edge is the same.  
	
Then since the face $ABC$ is contained in a totally geodesic plane isometric to $\mathbb{H}^2$, we can map $ABC$ isometrically into some copy of $\mathbb{H}^2$ contained in  $\mathbb{H}^3$.  Call this map $\Phi$.

	Note also that the choice of $\Phi$ together with a choice of orientation for $\mathbb{H}^3$ and for the triangle $ABC$ uniquely determines linear isometries between the tangent spaces to $A$, $B$, and $C$ and respectively $\Phi(A)$, $\Phi(B)$, and $\Phi(C)$-- fix such a linear isometry
	\[
	d\Phi_A: T_A(M) \rightarrow T_{\Phi(A)} ( \mathbb{H}^3), 
	\]
	and similarly for $B$ and $C$.  We can then extend $\Phi$ to the line segment $AD$ by sending it to the geodesic ray beginning at $\Phi(A)$ with tangent vector at $\Phi(A)$ equal to the image under $d\Phi_A$ of the tangent vector to $AD$ at $A$, and similarly for $B$ and $C$.    
	
	By the fact that the faces of $\Delta$ meet along constant angles, we know that $\Phi(AD)$ and $\Phi(BD)$ intersect, and that triangle $\Phi(ABD)$ is isometric to triangle $ABD$, and similarly for triangles $ACD$ and $BCD$.  We can thus extend $\Phi$ to an isometry on all of $\Delta$ as desired.

	\subsection{} 
	
	In the cube case, it is possible by a similar but more complicated argument to define an isometric embedding $\Phi$ of $\partial \Box$ in $\mathbb{H}^n$ ``by hand."  Instead, we give a short proof that this is possible using a developing map argument.  
	
	Note that every point $p$ on a hyperface of $\Box$ has a neighborhood in $\partial \Box$ that isometrically embeds in $\mathbb{H}^n$.  If $p$ is contained in the interior of a hyperface then this is immediate.  Otherwise we can first define an isometric embedding $\Phi'$ on the intersection of  a neighborhood of $p$ with a hyperface $H$ containing $p$.  Then the fact that hyperfaces of $\Box$ meet along constant angles, since they are contained in totally geodesic hypersurfaces, allows us to extend $\Phi'$ to a neighborhood of $p$ in $\partial \Box$.

	 To define $\Phi$, we fix a point $p$ in $\partial \Box$, and define $\Phi$ on a neighborhood of $p$ in $\partial \Box$ to be some isometric embedding of that neighborhood in $\mathbb{H}^n$.  Then for any $p' \in \partial \Box$ and any path $\gamma$ in $\partial \Box$ joining $p$ to $p'$, we can use the fact that each point has a neighborhood in $\partial \Box$ that isometrically embeds in $\mathbb{H}^n$ to extend $\Phi$ along the path $\gamma$ and define $\Phi(p')$. Since $\partial \Box$ is simply connected, $\Phi(p')$ is well-defined independent of the choice of $\gamma$.  Therefore the map $\Phi$ is  a well-defined local isometry onto its image from $\partial \Box$ into $\mathbb{H}^n$.  Note that $\Phi$ is an isometry restricted to each hyperface of $\partial \Box$ and that the images of opposite hyperfaces under $\Phi$ are disjoint.   Therefore $\Phi$ is an isometric embedding onto its image.

	\subsection{} 
	
	We define a new Riemannian manifold $H$ as follows.   First, we take the closure of the non-compact connected component of the complement of $\Phi(\Box)$ in $\mathbb{H}^n$ equipped with the hyperbolic metric.  We then ``fill in" the boundary $\Phi(\Box)$ by gluing in the region $R_\Box$ bounded by $\Box$ in $\tilde{N}$ to obtain $H$.  Since $\Phi$ is an isometry, this defines a Riemannian manifold which we denote by  $H$.  Note that apriori the metric tensor of $H$ is only continuous.  
	
	We claim that $H$ must be isometric to $\mathbb{H}^n$.  Our proof is similar to some of the arguments in \cite{sz90} (see also the recent paper \cite{gs22} and the references therein.)  
	
	Every totally geodesic two-dimensional plane $S$ in $\mathbb{H}^n$ that intersects $\Phi(\Box)$ transversely corresponds to a totally geodesic surface with boundary $S'$ in $H$, whose polygonal boundary  is contained in $\Phi(\Box)$.  The surface $S'$ is isometric to $S$ with the compact region bounded by some $m$-sided polygon $P'$ removed.   Note that $P'$ is isometric to a polygon $P$ in $S \cong \mathbb{H}^2$, and label the vertices of $P$ (resp. $P'$) as $v_1,..,v_\ell$ (resp. $v_1',..,v_\ell'$.) 
	
	First consider the case that $P$ is a triangle.  Note that $P$ has the same angles and side-lengths as a geodesic triangle in $\mathbb{H}^2$.  Then since $R_{\Box}$ is geodesically convex and has curvature bounded above by $-1$, the equality case of Corollary \ref{cofr} implies that $P$ bounds a totally geodesic hyperbolic solid triangle in $R_{\Box}$, and  $S'$ thus extends to an embedded hyperbolic plane in $H$.   
	
	To prove the general case, we will show that for any $i,j$  with $1 \leq i,j \leq \ell$ and $ \ell- |i-j|$ or $|i-j|$ less than $k$,  the length of $v_i'v_j'$ is greater than or equal to that of  $v_iv_j$.  Take this as the inductive hypothesis, assume that it holds for $k$, and choose $i$ and $j$ with $|i-j| =k-1$.  Relabeling if necessary we can assume $i=1$ and $j=k$.  
	
	First we claim that for any $m$ such that $1<m<k$, the angle $<v_{m-1}'v_k'v_m'$ is less than or equal to $<v_{m-1} v_k v_m$.  To see this, by the inductive hypothesis $v_{m-1}'v_k'$ and $v_{m}'v_k'$ have length greater than or equal to respectively $v_{m-1}v_k$ and $v_{m}v_k$, and the length of $v_{m-1}'v_m'$ is the same as that of  $v_{m-1}v_m$.  The claim then follows from Corollary \ref{cofr}.  
	
		\begin{figure}
		\includegraphics[width=50mm]{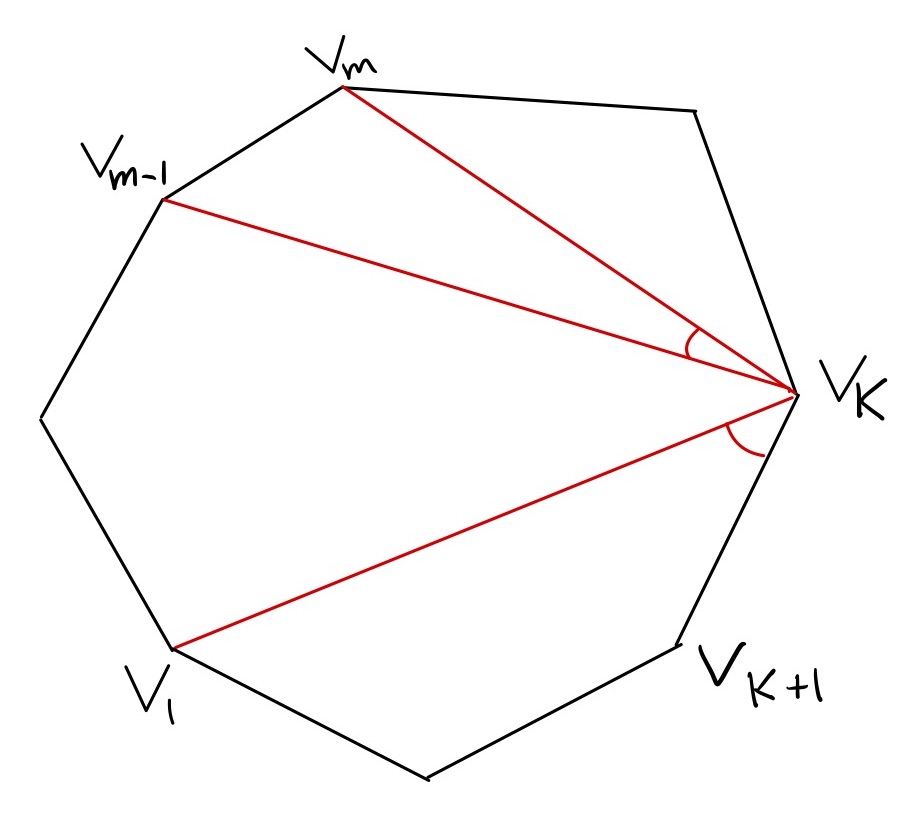}
		\caption{}
	\end{figure}
	
	Second, we claim that angle $>v_1'v_k' v_{k+1}'$ is greater than or equal to  angle $>v_1v_k v_{k+1}$.  Note that  the sum  of $>v_1'v_k' v_{k+1}'$  and 
	\begin{equation} \label{sum23} 
	\sum_{p=2}^{k-1}  >v_{p-1}'v_k'v_p' 
	\end{equation} 
	is equal to $>v_{k-1}'v_k'v_{k+1}'$, which is equal to $>v_{k-1}v_kv_{k+1}$.  Therefore since (\ref{sum23}) is smaller than or equal to  the same sum with the $v_i'$ replaced by $v_i$, this implies that $>v_1'v_k' v_{k+1}'$  is greater than or equal to  $>v_1v_k v_{k+1}$.  Using  Corollary \ref{cofr} we can then conclude that the length of $v_1'v_{k+1}'$ is greater than or equal to the length of $v_1v_{k+1}$.  In the case of equality, we could then use the equality case of Corollary \ref{cofr} to conclude that $v_1'v_k'$ has length equal to $v_1v_k$ and $v_1'v_{k+1}'$ has length equal to $v_1v_{k+1}$, and that $\Delta v_1' v_{k+1}'v_k'$ can be filled in by a totally geodesic hyperbolic triangle.  Continuing in this way we see that we must have had equality at every previous stage of the induction, and we finally get that the length of  $v_1'v_3'$ equals the length of $v_1v_3$, and so $\Delta v_1'v_2'v_3'$ can be filled in by a hyperbolic triangle. Removing $\Delta v_1'v_2'v_3'$ we obtain a new polygon with one fewer side on which we can repeat the same argument.   If we never have equality, then we can conclude the finite induction to get that $v_1'v_\ell'$ has length strictly greater than  $v_1v_\ell$, which is a contradiction since the two have the same length.  We have thus shown that $S'$ extends to an embedded totally geodesic hyperbolic plane.

	\subsection{} 
	
	We explain why this implies that $H$ has constant curvature $-1$.  Take a large metric sphere $B$ in $\mathbb{H}^n$ that contains $\Phi(\Box)$, and let $\mathcal{C}$ be the set of round (but not necessarily great) circles contained in the metric sphere $\partial B$.  There is a natural disk bundle $\mathcal{D}$ over $\mathcal{C}$ whose fiber over $C \in \mathcal{C}$ is equal to the totally geodesic disc in $B$ bounded by $C$.  We define a map $f$ from $\mathcal{D}$ to the Grassmann bundle $Gr_2(\overline{R_{\Box}})$ of unoriented tangent 2-planes to the closure of $R_{\Box}$ as follows.  Here $Gr_2(\overline{R_{\Box}})$  is defined by viewing $\overline{R_{\Box}}$ as a subset of $N$ and pulling back $Gr_2(N)$ to $\overline{R_{\Box}}$.  

 Suppose $d\in \mathcal{D}$ corresponds to a circle $C \in \mathcal{C}$ that bounds a totally geodesic disc $D$ containing $d$. We take the unique totally geodesic tangent plane $S$ that intersects $\partial B$ in  $D$, which by what we have shown above corresponds to a totally geodesic plane $S'$ in $H$.  This $S'$ contains an isometric copy $D'$ of the hyperbolic disc $S \cap B$.  We define $f(d)$ to be the tangent plane $\Pi'$ to the point $d'$ on $D'$ corresponding to $d$ if it is contained in $\overline{R_{\Box}}$, or else the parallel transport of $\Pi'$ along the geodesic joining $d'$ to its nearest point projection to $\partial \overline{R_{\Box}}$.  
 
Note that $f$ is continuous, and $f$ is injective near every point that maps into the interior of $R_{\Box}$.  Since $\mathcal{D}$ and the interior of  $Gr_2(\overline{R_{\Box}})$ both have dimension $3n-4$, the invariance of domain theorem implies that $f$ is a local homeomorphism onto its image near any such point $d$.   Since the image of $f$ is both open and closed, the fact that $Gr_2(\overline{R_{\Box}})$ is connected implies that $f$ is surjective. It follows that $R_{\Box}$ has constant curvature $-1$, which implies that $H$ is isometric to $\mathbb{H}^n$.

\section{Proof of Theorem 1.5} \label{proofmainthm} 

Let $M$ be a closed hyperbolic manifold, and let $N$ be a Riemannian manifold homotopy equivalent to $N$ with sectional curvature at most $-1$.  Fix a homotopy equivalence $F: M \rightarrow N$. Let $\Sigma$ be a  totally geodesic hypersurface in $M$, and assume that $N$ contains a totally geodesic hypersurface $\Sigma_N$ whose fundamental group includes to $\pi_1(\Sigma)$ up to conjugacy, where we've used $F$ to identify $\pi_1(M)$ and $\pi_1(N)$.   To show that $\Sigma_N$ satisfies the well-distribution property, we will actually need $\Sigma$ to satisfy a stronger condition, which we define now. 

\begin{defn} 
	We say that $\Sigma$ satisfies the \textit{strong well-distribution property} if the following holds.  Let $\mathcal{B}_{\Sigma}$ be the set of solid embedded hypercubes in $\mathbb{H}^n=\tilde{M}$ each of whose boundary hyperfaces is contained in a lift   of  $\Sigma$ to the universal cover $\mathbb{H}^n$.  Then for each geodesic $\gamma$ in the universal cover $\tilde{M} \cong \mathbb{H}^n$ of $M$, we can find  hypercubes $\Box_i \in \mathcal{B}_{\Sigma}$, $i \in \mathbb{Z}$,  so that the following holds.  This sequence of hypercubes is then said to \textit{enclose} $\gamma.$

	\begin{enumerate}  
		\item Exactly two hyperfaces $H_{i,top}$ and $H_{i,bottom}$ of $\Box_i$ intersect $\gamma$ 
		\item  The totally geodesic hyperplanes that contain the hyperfaces of $\Box_i$, excluding $H_{i,top}$ and $H_{i,bottom}$, do not intersect $\gamma$.  
		
		\item $\Box_{i-1}$ and $\Box_i$ satisfy the \textit{interlocking property}:  the totally geodesic hyperplane that contains $H_{i-1,top}$ is contained in the region between those of  $H_{i,top}$ and $H_{i,bottom}$.  
		
		\item Each point on $\gamma$ is contained in some $\Box_i$.  
		
	\end{enumerate}

	\end{defn} 

Now let $\Sigma_k$ be an infinite sequence of distinct closed totally geodesic hypersurfaces in $M$.  We claim that for $k$ large enough, $\Sigma_k$ satisfies the strong well-distribution property.  By Corollary \ref{coms} we know that for every $\epsilon>0$ the $\Sigma_k$ are $\epsilon$-dense in $Gr_{n-1}(M)$ for $k$ large enough.

For $\epsilon>0$ sufficiently small, we define the following hypercube $\Box_{model}(\epsilon)$ with totally geodesic faces in $\mathbb{H}^n$.  Take some point $P$ in $\mathbb{H}^n$, and let $E_1,..,E_n$ be an orthonormal basis for the tangent space at that point.  For each $i$, let $H_i^{+}$ and $H_{i}^-$ be the totally geodesic hyperplanes orthogonal to the geodesic through $P$  in the direction of $E_i$, and at signed distance of respectively $\epsilon$ and $-\epsilon$ from $P$.  Let $U_i$ be the connected component of the complement of $H_i^{+} \cup H_{i}^-$ in $\mathbb{H}^n$ containing $P$.  We define $\Box_{model}(\epsilon)$ to be the closure of the intersection $\cap_{i=1}^n U_i$, where we have taken $\epsilon$ small enough that this intersection is a solid embedded hypercube.  

We say that two hypercubes $\Box$ and $\Box'$ with totally geodesic hyperbolic faces are $\delta$-close if there is a labeling of the vertices $V_1,..,V_\ell$ of $\Box$ and the vertices $V_1',..,V_{\ell}'$ of $\Box'$ such that 

\begin{enumerate} 
	\item For all $i,j$ such that $V_i$ and $V_j$ are adjacent the length of $V_iV_j$ and the length of $V_i'V_j'$ differ by less than $\delta$. 
	\item For all $i,j,p$ such that $V_i$ and $V_p$ are adjacent to $V_j$ the angle $>V_iV_jV_p$ differs from the angle $>V_i'V_j'V_p'$ by less than $\delta$.  
	\end{enumerate}

 Denote by $\mathcal{B}_{\Sigma_k}^{\epsilon}$ the set  of hypercubes $\Box$ in $\mathcal{B}_{\Sigma_k}$ that are $\delta$-close to $\Box_{model}(\epsilon)$, for some small $\delta$ be be specified later.  The reason for defining $\mathcal{B}_{\Sigma_k}^{\epsilon}$ is that having lots of elements of $\mathcal{B}_{\Sigma_k}$ of controlled shape will be useful for enclosing $\gamma$.

Fix $\epsilon>0$ small enough that $\Box_{model}(\epsilon)$ is defined.  For every geodesic $\gamma$ in the universal cover, we claim that we can find a collection of solid hypercubes $\Box$ contained in $\mathcal{B}_{\Sigma_k}^{\epsilon}$ that enclose $\gamma$, provided that $k$ was chosen sufficiently large and $\delta$ was chosen sufficiently small at the start.   Let $\mathcal{B}_{\Sigma_k}^{\epsilon}(\gamma)$ be the set of hypercubes $\Box$ in the universal cover so that the following holds: 

\begin{enumerate}
	\item $\Box$ is the lift of some element of $\mathcal{B}_{\Sigma_k}^{\epsilon}$ to the universal cover 
	
	\item $\gamma$ intersects the boundary of $\Box$ in exactly two points $p_1$ and $p_2$ on opposite hyperfaces $H_{top}$ and $H_{bottom}$ of $\Box$  
	
	\item $p_1$ and $p_2$ are at a distance of less than $\delta$ from the centers of $H_{top}$ and $H_{bottom}$ respectively.  Here the centers of $H_{top}$ and $H_{bottom}$ are the endpoints of the geodesic segment of shortest length beginning at $H_{bottom}$ and ending at $H_{top}$.  
	
	\item $\gamma$ makes an angle between $\pi/2 -  \delta $ and  $\pi/2 +  \delta $ with $H_{top}$ and $H_{bottom}$ at respectively $p_1$ and $p_2$ 
	\end{enumerate} 

Here $\delta$ is chosen small enough that for every $\gamma$ and $\Box$ satisfying the above four conditions, $\gamma$ will be disjoint from all of the totally geodesic hyperplanes that contain hyperfaces of $\Box $ different from $H_{top}$ and $H_{bottom}$.  We emphasize that all of the requirements we impose on $\delta$ and $k$ will be independent of $\gamma$. That they can be satisfied follows from the fact that given any $\delta'$, any embedded copy of $\Box_{model}(\epsilon)$ in $\mathbb{H}^n$ is $\delta'$-close to some element of $\mathcal{B}_{\Sigma_k}^{\epsilon}(\gamma)$, provided $\delta$ was chosen small enough and $k$ was chosen large enough at the start.  

If $k$ was chosen large enough, then every point on $\gamma$ will be contained in some element of $\mathcal{B}_{\Sigma_k}^{\epsilon}(\gamma)$.  Suppose that $p \in \gamma$ is contained in some $\Box_p \in \mathcal{B}_{\Sigma_k}^{\epsilon}(\gamma)$, whose boundary intersects $\gamma$ at $p_1$ and $p_2$.  Then provided $\delta$ was chosen small enough and $k$ was chosen large enough, we can find some $\Box_{p_1} \in \mathcal{B}_{\Sigma_k}^{\epsilon}(\gamma)$ that contains $p_1$, and whose boundary intersects  $\gamma$ at points $p'$ and $p''$ so that 
\[
\min (d(p',p_1), d(p'',p_1) >\epsilon/4.  
\]

In this case, again provided $\delta$ was chosen small enough, $\Box_{p_1}$ and $\Box_p$ satisfy the interlocking property, and in a similar way we can find a $\Box_{p_2}$ containing $p_2$ in its interior so that $\Box_{p_2}$ and $\Box_p$ satisfy the interlocking property.  Continuing in this way, we can find a collection of $\Box \in \mathcal{B}_{\Sigma_k}^{\epsilon}(\gamma)$ that enclose $\gamma$.  This shows that $\Sigma_k$ satisfies the strong well-distribution property for large enough $k$.

\subsection{} 
Suppose that $\Sigma$ satisfies the strong well-distribution property.  Then we claim that $\Sigma_N$ is well-distributed in $N$.  First, note that each hypercube $\Box$ bounded by lifts of  $\Sigma$ to the universal cover corresponds to a hypercube  $\Box'$ in $N$ bounded by lifts of $\Sigma_N$.  This can be seen as follows.  Suppose that $\Box$ is bounded by $2n+2$ totally geodesic hyperplanes $H_1,..,H_{2n+2}$ that are lifts of $\Sigma$.  

Recall the discussion in \ref{backgroundnegcurv}. The map $F$ defines a homeomorphism between the boundary at infinity of $\mathbb{H}^n$ and the boundary at infinity of the universal cover $\tilde{N}$ of $N$.  Taking the images of the boundaries at infinity of the $H_i$ we  obtain  $2n+2$ $n-2$-discs  in $\partial_\infty \tilde{N}$.  These bound totally geodesic hyperplanes that project down to $\Sigma_N$.  We thus obtain a hypercube $\Box'$ in $\tilde{N}$ whose sides are contained in lifts of $\Sigma_N$.  

We claim that $\Sigma_N$ satisfies the well distribution property in $N$.  Let $\gamma_N$ be a geodesic in the universal cover $\tilde{N}$.  Let $\gamma$ be a geodesic in $\tilde{M}=\mathbb{H}^n$ with the same endpoints at infinity as $\gamma_N$, identifying the two boundaries at infinity as above.  Since $\Sigma$ satisfies the strong well-distribution property, we can find a sequence of hypercubes $\Box_i: i \in \mathbb{Z}$ in $\mathbb{H}^n$ whose hyperfaces are contained in lifts of $\Sigma$ so that the following are true: 

\begin{enumerate}  
	\item Exactly two hyperfaces $H_{i,top}$ and $H_{i,bottom}$ of $\Box_i$ intersect $\gamma$
	
	\item $H_{i-1,top}$ is contained in the region between $H_{i,top}$ and $H_{i,bottom}$
	
	\item Each point on $\gamma$ is contained in some $\Box_i$.  
	
	\end{enumerate} 

The hyperplanes corresponding to the faces of  $\Box_i$ excluding $H_{i,top}$ and $H_{i,bottom}$ bound an infinite solid rectangular ``pillar" that contains $\gamma$.

\begin{figure}
	\includegraphics[width=50mm]{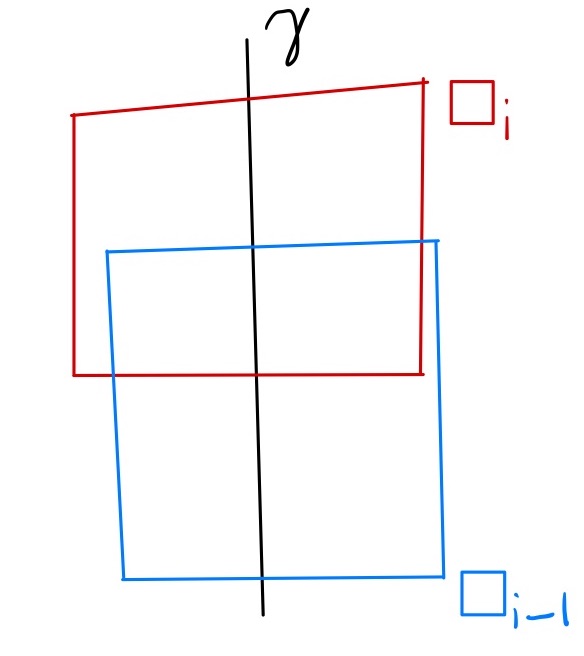}
	\caption{Hypercubes enclosing $\gamma$ } 
\end{figure}

The hypercubes $\Box_i'$ corresponding to the $\Box_i$ satisfy the same three properties in $\tilde{N}$, but with $\gamma$ replaced by $\gamma_N$.  Item (1) follows from the fact that a totally geodesic hyperplane $H$ in $\tilde{N}$ will intersect a given geodesic $\gamma$ exactly if the two endpoints of $\gamma$ in $\partial_{\infty} \tilde{N}$ are in separate components of the complement of $\partial_{\infty} H$ in $\partial_{\infty} \tilde{N}$.  This implies that whether $\gamma$ intersects $H$ is determined by what happens in $\mathbb{H}^n$ for the corresponding geodesic and lift of $\Sigma$.  The second property follows in a similar way. 

The first two properties imply that the subset of $\gamma$ of points contained in some $\Box_i'$ is both open and closed, and so must be all of $\gamma$.

Therefore, since every point in $N$ is contained in some geodesic,  every point $p$ of $\tilde{N}$ is contained in an embedded hypercube $\Box_p'$ as above.  By Theorem \ref{main}, this implies that $N$ has constant curvature $-1$.  Since $N$ is homotopy equivalent to $M$, $N$ must be isometric to $M$ by Mostow's rigidity theorem.

\section{Examples Where Rigidity Fails} \label{counterexample} 

In this section we prove the following theorem.  We expect that a similar construction works in higher dimensions, but for simplicity we focus on the three dimensional case.

\begin{thm} \label{ewrf} 
There exists a closed hyperbolic 3-manifold $(M,g_{hyp})$ and a metric $g$ on $M$  that is not isometric to $g_{hyp}$ such that the following hold
	\begin{enumerate}
	\item $(M,g_{hyp})$  contains a closed totally geodesic surface $S$.  
	\item $S$ is a totally geodesic hyperbolic surface in the metric $(M,g)$ 
	\item The sectional curvature of $g$ is less than or equal to $-1$.
	\end{enumerate}
	\end{thm}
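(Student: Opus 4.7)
The plan is to take a closed hyperbolic 3-manifold $(M,g_{hyp})$ containing a closed totally geodesic surface $S$---such $M$ exist, e.g.\ by the arithmetic constructions cited in \cite{mr03}---and to modify $g_{hyp}$ only inside a small open region $B \subset M \setminus S$. If the modification agrees with $g_{hyp}$ to infinite order on $\partial B$, the resulting metric $g$ extends smoothly to all of $M$ and equals $g_{hyp}$ on a neighbourhood of $S$; in particular $S$ is still totally geodesic with hyperbolic induced metric in $(M,g)$, giving (1) and (2). If in addition $K_g \leq -1$ everywhere on $B$ and $K_g(\Pi) < -1$ for some 2-plane $\Pi$ at some interior point of $B$, then (3) holds and $g$ is not isometric to $g_{hyp}$, since the pointwise range of sectional curvatures is an isometry invariant, equal to $\{-1\}$ at every point for $g_{hyp}$ but containing a value strictly below $-1$ somewhere for $g$.

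The whole content of the theorem therefore lies in the local construction on $B$ (the proposition attributed to L.~Mazet in the acknowledgements). A rotationally symmetric warp $dr^2 + \phi(r)^2 d\sigma^2_{S^2}$ cannot work: the inequalities $\phi'' \geq \phi$ and $(\phi')^2 \geq 1 + \phi^2$ required for $K \leq -1$, together with the boundary matching $\phi = \sinh$ to all orders on $\partial B$, force $\phi \equiv \sinh$ by a Sturm/integral-equation comparison (write $\psi = \phi - \sinh$; then $\psi(0) = \psi'(0) = 0$ and $\psi'' \geq \psi$, so the representation $\psi(r) = \int_0^r \sinh(r-s)\,(\psi''-\psi)(s)\,ds \geq 0$ combined with $\psi = 0$ at the outer boundary forces $\psi \equiv 0$). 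The construction must therefore break the symmetry. A natural framework is to take $B$ to be a Fermi tube around a short closed geodesic $\gamma \subset M \setminus S$, write $g_{hyp} = dr^2 + \sinh^2(r)\, d\theta^2 + \cosh^2(r)\, ds^2$, and perturb the metric with genuine $\theta$- and $s$-dependence (and possibly cross-terms), compactly supported in the interior of the tube. One then computes the six pair sectional curvatures to leading order in the perturbation parameter and arranges them to be uniformly at most $-1$, strictly so at at least one point and plane. In three dimensions this is plausible because the Riemann tensor is determined by the Ricci tensor, so one has essentially six scalar curvature quantities to steer using a six-parameter family of symmetric 2-tensor perturbations; a small-parameter argument then promotes the linearised inequality to a genuine smooth metric.

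Granting the local construction, the theorem follows by patching: $g$ coincides with $g_{hyp}$ outside $B$, so (1), (2), and the curvature bound on $M\setminus B$ are automatic; $g$ is smooth across $\partial B$ by the infinite-order matching; $K_g \leq -1$ on $B$ by construction; and $g$ is not isometric to $g_{hyp}$ by the curvature-invariant argument above. The main obstacle is clearly the explicit local construction: one must produce a non-trivial perturbation whose first-order effect on all six sectional curvatures is non-positive throughout $B$ and strictly negative at some plane, while enforcing infinite-order agreement with $g_{hyp}$ on $\partial B$ and smoothness. Once such a first-order deformation is found, absorbing quadratic errors for small perturbation parameter is routine.
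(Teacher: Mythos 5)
Your high-level strategy differs from the paper's in a way that is not merely cosmetic: it runs into a rigidity obstruction. You propose to keep $g=g_{hyp}$ on all of $M$ outside a small region $B\subset M\setminus S$, with $K_g\leq -1$ on $B$, strict at a point, and with $g$ agreeing with $g_{hyp}$ to infinite order along $\partial B$. If $B$ is a ball, this is impossible. Lift one copy $\tilde B$ of $B$ to the universal cover and patch the lifted perturbation on $\tilde B$ with the genuine hyperbolic metric $g_0$ everywhere else; by your infinite-order matching this yields a smooth metric on $\mathbb H^3$ with $K\leq -1$ that coincides with $g_0$ outside the compact convex set $\tilde B$. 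A Hadamard manifold with $K\leq -1$ that is hyperbolic outside a compact convex set is hyperbolic --- this is exactly the Schroeder--Ziller type rigidity (\cite{sz90}) that the paper itself invokes and essentially re-proves in Section 4. So no such ball perturbation exists, irrespective of how much symmetry you break; this gives a conceptual explanation for the Sturm rigidity you found in the rotationally symmetric case. Passing to a Fermi tube around a closed geodesic avoids the compact-lift argument (the lift of the tube is non-compact), but there the rotationally symmetric warp $dr^2+\phi(r)^2d\theta^2+\psi(r)^2ds^2$ is rigid by the very same Sturm comparison applied to both $\phi-\sinh$ and $\psi-\cosh$, and the non-symmetric case you explicitly leave unconstructed. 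As it stands, the ``main obstacle'' you identify is not just the hard part of the proof --- it is an obstacle that, for a ball, cannot be overcome, and, for a tube, you have given no construction.

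The paper's construction sidesteps all of this by refusing to keep the ambient metric hyperbolic away from $S$. Proposition \ref{prop1} builds a warped product $g=f(t)^2g_{hyp}^{S}+dt^2$ on the Fuchsian cover $S\times\mathbb R$ with $f(t)=\ell^{-1}\cosh(\ell t)+\chi(t)(1-\ell^{-1})$, $\ell>1$: outside a compact tube around $S\times\{0\}$ the metric has \emph{constant curvature $-\ell<-1$}, not $-1$, and the ambient curvature is strictly below $-1$ at points arbitrarily close to $S$. It is precisely the freedom to land on a more negatively curved model far from $S$ that turns your Sturm equality into a genuine inequality. Lemma \ref{norminj} then supplies, via subgroup separability, a finite cover $M'$ of $M$ in which a totally geodesic copy of $S$ has normal injectivity radius larger than the support of the perturbation, so the warped tube glues smoothly onto a constant-curvature-$(-\ell)$ metric on all of $M'$. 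Non-isometry to $g_{hyp}$ is then immediate. The two ideas missing from your proposal are (a) that one must change the metric globally --- to curvature $-\ell$ far from $S$ --- rather than by a compactly supported bump in $M\setminus S$, and (b) the finite-cover / normal-injectivity-radius step needed to embed the warped tube in a closed manifold.
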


Our construction will be based on the following proposition.    

\begin{prop} \label{prop1}
Let $\ell>1$ and let  $g_\ell$ be a metric on $S\times \mathbb{R}$ with constant curvature $-\ell$ so that $S\times \{0\}$ is a  totally geodesic surface in the metric $g_\ell$.  There exists a smooth metric $g$ on $S\times \mathbb{R}$ satisfying the following conditions: 

\begin{enumerate} 
	\item $g=g_\ell$ outside of a compact set 
	\item $g$ has sectional curvature $K_g \leq -1$ 
	\item $S \times \{0\}$ is totally geodesic in $g$ with constant sectional curvature $-1$.  
	\end{enumerate}
\end{prop}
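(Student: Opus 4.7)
The plan is to realize $g$ as a warped-product perturbation of $g_\ell$ in Fermi coordinates about $S \times \{0\}$. Since $g_\ell$ has constant curvature $-\ell$ and $S \times \{0\}$ is totally geodesic, Fermi coordinates identify a tubular neighborhood of $S \times \{0\}$ with a warped product
\[
g_\ell = \cosh^2(\sqrt{\ell}\, t)\, g_S + dt^2,
\]
where $g_S$ is the induced metric of constant curvature $-\ell$ on $S \times \{0\}$. I will seek $g$ of the form $g = f(t)^2\, g_S + dt^2$ for a smooth even function $f : \mathbb{R} \to (0,\infty)$ with $f(t) = \cosh(\sqrt{\ell}\, t)$ outside a compact interval, and extend $g$ by $g_\ell$ elsewhere.

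Translating the three requirements: the matching condition $g = g_\ell$ outside a compact set becomes $f(t) = \cosh(\sqrt{\ell}\, t)$ for $|t| \geq T$; the vanishing of the second fundamental form of $S \times \{0\}$ becomes $f'(0) = 0$; and the requirement that the induced metric $f(0)^2\, g_S$ have sectional curvature $-1$ forces $f(0) = \sqrt{\ell}$, since rescaling a metric of curvature $-\ell$ by a factor $c^2$ gives a metric of curvature $-\ell/c^2$. The standard warped-product curvature formulas split $K_g \leq -1$ into a radial inequality $f'' \geq f$ and a tangential inequality $f^2 \leq \ell + (f')^2$.

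The central observation is that the tangential inequality is automatic given the radial one together with the initial data. Setting $Q(t) := f(t)^2 - f'(t)^2 - \ell$, one computes $Q'(t) = 2\, f'(t)\, (f(t) - f''(t))$. Because $f'' \geq f > 0$ and $f'(0) = 0$, $f'$ is non-decreasing and hence non-negative on $[0, \infty)$, so $Q' \leq 0$ there. Since $Q(0) = \ell - 0 - \ell = 0$, this yields $Q \leq 0$ for $t \geq 0$, and by evenness on all of $\mathbb{R}$.

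It remains to construct a smooth even $f$ with $f(0) = \sqrt{\ell}$, $f'(0) = 0$, $f(t) = \cosh(\sqrt{\ell}\, t)$ for $|t| \geq T$, and $f'' \geq f$. I take
\[
f(t) := \cosh(\sqrt{\ell}\, t) + (\sqrt{\ell} - 1)\, \chi(t),
\]
where $\chi(t) = \rho(t/T)$ for a fixed non-negative, even, smooth bump $\rho$ with $\rho(0) = 1$ and $\rho \equiv 0$ outside $[-1,1]$, and $T$ to be chosen. The boundary data are then immediate, and
\[
f''(t) - f(t) = (\ell - 1)\cosh(\sqrt{\ell}\, t) + (\sqrt{\ell} - 1)(\chi''(t) - \chi(t)).
\]
Using $\chi \leq 1$ and $\cosh \geq 1$, one checks that it suffices to arrange $\chi''(t) \geq -\sqrt{\ell}$, which holds for all sufficiently large $T$ since $\|\chi''\|_\infty = O(T^{-2})$. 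The only quantitative point is this choice of $T$; once it is made, the warped-product construction yields the desired $g$ and the rest is formal.
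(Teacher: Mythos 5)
Your proof is correct, and it reaches the result by the same overall route the paper takes: write the Fuchsian metric as a warped product $f_0^2 g_S + dt^2$ with $f_0$ a $\cosh$ profile, perturb $f_0$ by a bump function to make $S\times\{0\}$ totally geodesic and hyperbolic while keeping $f$ unchanged outside a compact interval, and then verify the sectional curvature bound. Where you genuinely improve on the paper's argument is in verifying the tangential inequality. The paper checks that the tangential curvature $\dfrac{K_S-(f')^2}{f^2}\leq -1$ by computing the $t$-derivative of this quotient directly and grinding through the resulting inequality (its equations (5.6)--(5.10)), and it moreover has to split into cases $|t|<t_0$ and $|t|\geq t_0$ according to the sign of the coefficient of $g(X,\partial_t)^2+g(Y,\partial_t)^2$ in the curvature formula. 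Your observation that $Q := f^2-(f')^2+K_S$ satisfies $Q'=2f'(f-f'')$ collapses all of this: the radial bound $f''\geq f$, together with $f'(0)=0$ and the fact that the boundary conditions force $Q(0)=f(0)^2+K_S=0$, immediately give $Q\leq 0$ by monotonicity, with no case analysis. In other words, you have isolated the fact that in a warped product the tangential curvature bound is a consequence of the radial one plus the geometry of $S\times\{0\}$, which the paper never makes explicit. Your ansatz $f(t)=\cosh(\sqrt{\ell}\,t)+(\sqrt{\ell}-1)\chi(t)$ with $\chi=\rho(\cdot/T)$ is the exact analogue of the paper's $f(t)=\frac{1}{\ell}\cosh(\ell t)+(1-\frac{1}{\ell})\chi(t)$ (modulo a harmless difference in how the constant-curvature parameter is normalized), and your verification that $f''\geq f$ reduces, after factoring out $(\sqrt{\ell}-1)$, to $\chi''\geq -\sqrt{\ell}$, which you rightly note is guaranteed for $T$ large since $\|\chi''\|_\infty=O(T^{-2})$. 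This parallels the paper's smallness assumption on $|\chi''|$. The one small gloss worth flagging: you describe Fermi coordinates as identifying a ``tubular neighborhood'' with a warped product, but you should note that when $g_\ell$ is a scaled Fuchsian metric on $S\times\mathbb{R}$ with $S\times\{0\}$ totally geodesic, the normal exponential map is a global diffeomorphism, so the warped-product form holds on all of $S\times\mathbb{R}$ and the extension step is vacuous; this is implicit in how the paper writes $g_\ell$ globally.
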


V. Lima constructed non-hyperbolic examples as in the proposition, but with point (1) excluded.  The examples we give were explained to us by Laurent Mazet, and we are grateful to him for allowing us to include them.  Our verifications closely follow \cite{l19}[section 3].

\begin{proof}
   Let $(S,g_{hyp})$ be a hyperbolic surface, and let $f$ be a function defined by 
\[
f(t):= \frac{1}{\ell }\cosh (\ell t ) + \chi(t)(1- \frac{1}{\ell})
\]
for $\chi(t)$ a smooth non-negative function satisfying, for $\epsilon>0$ and $M$ respectively sufficiently small and large to be specified later: 

\begin{enumerate}
	\item  $\chi(t)=1$ near $t=0$, $\chi(t)=0$ for $|t|>M$ 
	\item $\max(|\chi'(t)|, |\chi''(t)|)< \epsilon$
	\end{enumerate}
 
 We take $\ell>1$.  Define $g$ to be  the warped product metric 
\[
g:= f^2(t) g_{hyp}^2 + dt^2.   
\]

Writing $g_\ell$ as 
\[
g_\ell = \frac{1}{\ell^2 }\cosh^2 (\ell t ) g_{hyp}^2 + dt^2, 
\]

Only the second item in the statement of the proposition  requires an argument.  In computing the curvature of $g$, we follow \cite{l19}.  For a vector field $V$ on $(S\times \mathbb{R},g)$, define the projection to $S$ by 
\[
V^S := V - g(V,\partial_t) \partial_t 
	\]
	
	The curvature tensor $R$ of $g$ can be written in terms of the curvature tensor $R_S$ of $(S,g_{hyp})$ as 
	\begin{align}
		R(X,Y,Z)&= R_S(X^S,Y^S)Z^S - \left(\frac{f'}{f}\right)^2 (g(X,Z)Y -g(Y,Z)X) \\
		&+ (\log f)'' g(Z,\partial_t) (g(Y,\partial_t)X -g(X,\partial_t)Y)\\
		&- (\log f)''(g(X,Z)g(Y,\partial_t)- g(Y,Z)g(X,\partial_t)\partial_t,
		\end{align}
	
	for $X$, $Y$, and $Z$ smooth vector fields.  Given a plane $\Pi$ spanned by unit length orthogonal vector fields $X,Y$, we have that the sectional curvature of $\Pi$ is given by 
	\begin{align} \label{curv}
		K_{\Pi}&= \frac{K_S - (f')^2}{f^2} + \left( \frac{-K_S + (f')^2 -f'' f}{f^2}\right)(g(X,\partial_t)^2 + g(Y,\partial_t)^2) 
		\end{align}

	For $|t|$ smaller than some $t_0$ we have that $\chi(t)$ is equal to $1$.  The numerator of the coefficent of $(g(X,\partial_t)^2 + g(Y,\partial_t)^2)$ is then equal to 
		\begin{align}
	&= 1 + \sinh^2(\ell t) - \ell \cosh \ell t (\frac{1}{\ell} \cosh(\ell t) + \chi(t) (1-\frac{1}{\ell})) \\
	&=(1-\ell)\cosh \ell t, 
	\end{align}
	
 which is less than zero, so we have that (\ref{curv}) is maximized when $(g(X,\partial_t)^2 + g(Y,\partial_t)^2)=0$.  In this case, we just need to show that the initial term
	\begin{equation} \label{q1}
	 \frac{K_S - (f')^2}{f^2}= \frac{-1-(\sinh(\ell t )+\chi'(t)(1-\frac{1}{\ell}))^2}{\left(\frac{1}{\ell} \cosh(\ell t) + \chi(t)(1-\frac{1}{\ell})\right)^2} 
	\end{equation}
\noindent is at most $-1$.  This  will actually be true for all $t$, not just $|t|<t_0$, which we verify now.  Note that this holds at $t=0$.   It is then enough to show that the quantity (\ref{q1}) is decreasing in $t$ for $t>0$ and increasing in $t$ for $t<0$.  For the derivative of the  quantity (\ref{q1}) to be negative, we need 
\vspace{-2mm} 
\begin{equation} \label{expn} 
	\end{equation} 

\vspace{-8mm} 

\begin{align*} 
0>&-2 (\sinh(\ell t )+\chi'(t)(1-\frac{1}{\ell}))  (\ell \cosh(\ell t )  + \chi''(t)(1-\frac{1}{\ell})) (\frac{1}{\ell} \cosh (\ell t) + \chi(t)(1-\frac{1}{\ell})) + \\
& (1+(\sinh(\ell t )+\chi'(t)(1-\frac{1}{\ell}))^2)(\sinh(\ell t) + \chi'(t) (1-\frac{1}{\ell}))  .  
\end{align*} 

Choose $t_0$ so that $\chi(t)=1$ for $|t|<t_0$.  We first do the case  $|t|<t_0$. Assume that $t>0$.    To verify  (\ref{expn}), it is enough to show that

\begin{equation} \label{ineq3}
 0  \geq \sinh (\ell t )(-2\cosh^2(\ell t) -2\ell \cosh(\ell t ) \chi(t) (1-\frac{1}{\ell}) + 1 + \sinh^2(\ell t)), 
\end{equation} 

\noindent where the RHS of (\ref{ineq3}) is all of the terms of the RHS of (\ref{expn}) that don't contain $\chi'(t)$ or $\chi''(t)$.      Since $\sinh(\ell t)(-2\ell \cosh(\ell t ) \chi(t) (1-\frac{1}{\ell}))$ is always non-positive provided $t>0$, for (\ref{ineq3}) to hold  it is enough that 

\begin{equation}
0\geq  -2\cosh^2(\ell t ) + 1  +\sinh^2(\ell t), 
\end{equation}

\noindent which is true since the RHS equals $ - \cosh^2(\ell t)$.  Thus the quantity (\ref{q1}) is negative if $t_0>t>0$, and the same reasoning shows it is positive if $-t_0<t<0$.  

Now assume that $t\geq t_0$ (the case $t\leq -t_0$ is similar.)  Then by the previous computations, we will be able to conclude that (\ref{q1}) is smaller than $-1$ for all $t$ as long as $-\sinh(\ell t) \cosh^2(\ell t)$   is larger in absolute value than all of the terms of the RHS of (\ref{expn}) that, when expanded out, contain $\chi'(t)$ or $\chi''(t)$.  This will be true provided $\epsilon$ was taken small enough at the start, since $\max(|\chi'(t)|, |\chi''(t)|) <\epsilon$ and no term in (\ref{expn}) has more than three factors equal to $\sinh(\ell t)$ or $\cosh(\ell t)$.  We have thus shown that (\ref{q1}) is smaller than $-1$ for all $t$.

We now check the case that $|t|>t_0$.  Because the initial term in (\ref{curv}) is always less than or equal to $-1$, if the second term in (\ref{curv}) is negative then we are done.  If not, it is enough to check (\ref{curv}) in the case that $g(X,\partial_t)^2 + g(Y,\partial_t)^2$ assumes its maximum possible value:  
\[
g(X,\partial_t)^2 + g(Y,\partial_t)^2=1.  
\]
The quantity (\ref{curv}) is then equal to $-f''/f$.  Therefore we need to show that 

\[
-1\geq -f''/f = -\frac{\ell^2 \cosh(\ell t) + \ell \chi''(t)(1-\frac{1}{\ell})}{\cosh(\ell t) + \ell \chi (t) (1-\frac{1}{\ell})})). 
\]

Since $\chi \leq 1$, it is enough to show that
\begin{equation} \label{eqn22}
1 \leq \frac{\ell^2 \cosh(\ell t)}{\cosh(\ell t) + \ell-1} + \frac{\chi''(t)(\ell-1)}{\cosh(\ell t) + \ell-1}.  
\end{equation}

Note that 
\[
\ell^2 \cosh(\ell t) \geq \ell \cosh(\ell t) + \ell(\ell-1), 
\]

so the first term on the RHS of (\ref{eqn22}) is greater than or equal to 
\[
\frac{\ell \cosh(\ell t) + \ell(\ell-1)}{\cosh(\ell t) + \ell-1} = \ell.  
\]

We can make the second term on the RHS of (\ref{eqn22}) as small as desired by choosing $\epsilon$ small enough because $ |\chi''|<\epsilon$. Therefore inequality (\ref{eqn22}) holds, which completes the proof of Proposition \ref{prop1}.  

\end{proof}

We also need the following lemma.  

\begin{lem} \label{norminj} 
Let $M$ be a closed hyperbolic 3-manifold containing an embedded two-sided totally geodesic surface $S$.   Then for any $L>0$, there exists a finite cover $M_0$ of $M$ which contains a totally geodesic surface $S_0$ isometric to $S$ and with normal injectivity radius greater than $L$.  
\end{lem}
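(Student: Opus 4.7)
The plan is to use subgroup separability (LERF) of $\pi_1(M)$, a consequence of the virtual specialness theorem of Agol (building on Wise and Haglund--Wise), to cut out the finitely many group elements that witness a small normal injectivity radius of $S$.

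First I would fix a lift $\tilde S \subset \mathbb{H}^3$ of $S$. Since $S$ is embedded and two-sided, the stabilizer of $\tilde S$ in $\pi_1(M)$ is exactly $\pi_1(S)$. For any finite-index subgroup $\Gamma_0 \leq \pi_1(M)$ containing $\pi_1(S)$, the cover $M_0 = \mathbb{H}^3/\Gamma_0$ contains an isometric copy $S_0 = \tilde S/\pi_1(S)$ of $S$, and the normal injectivity radius of $S_0$ in $M_0$ exceeds $L$ precisely when
\[
d(\tilde S, \gamma \tilde S) > 2L \qquad \text{for every } \gamma \in \Gamma_0 \setminus \pi_1(S),
\]
since $\tilde S$ is a totally geodesic copy of $\mathbb{H}^2$ in $\mathbb{H}^3$ and so has no normal cut locus. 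The task therefore reduces to producing a $\Gamma_0$ that avoids the ``bad'' set
\[
A \;=\; \{\gamma \in \pi_1(M)\setminus \pi_1(S) : d(\tilde S,\gamma \tilde S)\leq 2L\}.
\]

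Next I would observe that $A$ is invariant under left and right multiplication by $\pi_1(S)$ (which stabilizes $\tilde S$), and that it decomposes into only finitely many $(\pi_1(S),\pi_1(S))$-double cosets. Indeed, the family of totally geodesic planes $\{\gamma \tilde S\}_{\gamma \in \pi_1(M)}$ is locally finite in $\mathbb{H}^3$, and the quotient $\tilde S/\pi_1(S) = S$ is compact, so only finitely many $\pi_1(S)$-orbits of translates of $\tilde S$ come within distance $2L$ of $\tilde S$. I would then pick representatives $\gamma_1,\dots,\gamma_n$ of these double cosets and apply LERF to the subgroup $\pi_1(S)\leq \pi_1(M)$: for each $i$ this yields a finite-index subgroup $\Gamma_i \leq \pi_1(M)$ with $\pi_1(S)\subseteq\Gamma_i$ and $\gamma_i \notin \Gamma_i$. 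Setting $\Gamma_0 = \bigcap_{i=1}^n \Gamma_i$, any element $s_1 \gamma_i s_2 \in \Gamma_0$ with $s_1,s_2\in \pi_1(S) \subseteq \Gamma_0$ would force $\gamma_i\in \Gamma_0$, a contradiction, so $\Gamma_0\cap A=\emptyset$. The cover $M_0 = \mathbb{H}^3/\Gamma_0$ and the isometric copy $S_0 = \tilde S/\pi_1(S)$ then satisfy the conclusion of the lemma.

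The only substantive input is LERF for closed hyperbolic 3-manifold groups, which is where the main difficulty lies; the rest is routine bookkeeping with double cosets, and I do not anticipate any further obstacle.
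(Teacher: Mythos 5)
Your argument is correct, and the strategy matches the paper's: reduce the lemma to subgroup separability of $\pi_1(S)$ in $\pi_1(M)$ applied to a finite set of offending elements. The bookkeeping differs. The paper tiles the Fuchsian cover by translates of a polyhedral fundamental domain and excludes finitely many coset representatives whose tiles cover the $L$-neighborhood of $S$; you instead characterize normal injectivity radius $>L$ directly as $d(\tilde S,\gamma\tilde S)>2L$ for all $\gamma\in\Gamma_0\setminus\pi_1(S)$, decompose the bad set $A$ into finitely many $(\pi_1(S),\pi_1(S))$-double cosets using local finiteness of the translates $\gamma\tilde S$ together with cocompactness of $\pi_1(S)$ on $\tilde S$, and note that a finite-index $\Gamma_0\supseteq\pi_1(S)$ omitting one representative per double coset omits all of $A$. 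Your version makes the bi-$\pi_1(S)$-invariance of $A$ explicit, which the paper leaves implicit in the fundamental domain picture; the two proofs are otherwise the same. One small point: separability of totally geodesic (more generally, geometrically finite) surface subgroups of closed hyperbolic $3$-manifold groups is classical and is exactly what the paper cites (Maclachlan--Reid, Lemma 5.3.6); invoking Agol's virtual specialness theorem is overkill for this case.
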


Recall that $S_0$ has normal injectivity radius greater than $L$ in $M_0$ if no two distinct geodesic segments of length less than $L$ beginning normal to $S_0$ intersect.  This means that the normal exponential map $S_0 \times (-L,L)\rightarrow M_0$  is a diffeomorphism onto its image, for $S_0 \times (-L,L)$ identified with a tubular neighborhood of the zero section of the normal bundle of $S_0$ in $M_0$ in the natural way.   

\begin{proof}
This is a consequence of subgroup separability for $\pi_1(S,p) \subset \pi_1(M,p)$, for some choice of basepoint $p$ on $S$ \cite{mr03}[Lemma 5.3.6]. This means that for any finite subset  $\{g_1,..g_n\} \subset\pi_1(M,p)$ disjoint from $\pi_1(S,p)$, we can choose a finite index subgroup of $\pi_1(M,p)$ containing $\pi_1(S,p)$ but none of $g_1,..g_n$.

  Let $F$ be the Fuchsian cover of $M$ corresponding to $\pi_1(S)$.  Fix a connected polyhedral fundamental domain $P$ for the action of $\pi_1(M)$ on the universal cover $\mathbb{H}^3$ of $M$. Then a fundamental domain in $\mathbb{H}^3$ for the cover $F$ is tiled by copies of $P$, and any fixed normal neighborhood of $S$ in $F$ is contained in the projections of finitely many such copies.    The images of the projections to $F$ of the copies of $P$ correspond to cosets of $\pi_1(S)$ in $\pi_1(M)$.  Choose finitely many coset representatives $g_1,..,g_n$ so that the corresponding projections of copies of $P$ contain the $L$-neighborhood of $S$ in $F$.  

Now use subgroup separability to choose a finite index subgroup of $\pi_1(M)$ containing $\pi_1(S)$ but none of $g_1,..,g_n$.  The finite cover of $M$ corresponding to this finite index subgroup is then also covered by $F$, and the restriction of the covering map to the $L$-neighborhood of $S$ in $F$ is a diffeomorphism onto its image.  This shows that we can make the normal injectivity radius of $S$ as large as desired by passing to finite covers.  
\end{proof}

We can now prove Theorem \ref{ewrf}. (Compare \cite{l21}[Section 6], where a similar  construction appeared.) Take a closed hyperbolic 3-manifold $(M,g_{hyp})$ containing an embedded two-sided totally geodesic surface $S$.  See \cite{mr03} for  examples.

 Let $F\cong S \times \mathbb{R}$ be the Fuchsian cover of $(M,g_{hyp})$ corresponding to $S$.   Let  $g_\ell$ be the homothetic scaling of the hyperbolic metric on $F$ that has constant curvature $-\ell$ for some $\ell>1$.  We apply Proposition \ref{prop1} to obtain a metric $g_K$ on $F$ that agrees with $g_\ell$ outside a compact set $K$, and so that $S \times \{0\}$ is a totally geodesic hyperbolic surface.  

Next we apply Lemma \ref{norminj} to find a finite cover of $M'$ so that $F$ also covers $M'$, and the restriction of the covering map $\rho:F \rightarrow M'$ to $K$ is injective.  We can then define a metric on $M'$ to be the scaled hyperbolic metric with constant curvature $-\ell$ outside of $\rho(K)$, and to be the pushforward of $g_K$ under $\rho$ on $\rho(K)$.  Since $g_\ell$ and $g_K$ agree outside of the compact set $K$, this gives a smooth well-defined metric on $M'$, which completes the proof.

\bibliographystyle{amsalpha} 
\bibliography{bibliography}
\end{document}